\documentclass[12pt,reqno]{amsart}
\usepackage{amsmath, amssymb, latexsym, amscd, amsthm,amsfonts,amstext}
\usepackage[mathscr]{eucal}
\theoremstyle{plain}
\newtheorem{theorem}{Theorem}

\newtheorem{Lemma}[theorem]{Lemma}
\newtheorem{proposition}[theorem]{Proposition}
\theoremstyle{remark}

\newtheorem{nx}{Remark}
\theoremstyle{definition}
\newtheorem{define}{Definition}

\def\R{{\mathscr R}}

\def\CC{{\mathbb C}}

\def\H{{\mathscr H}}
\def\P{{\mathscr P}}

\def\leq{\leqslant}

\def\RR{{\mathbb R}}

\setlength{\baselineskip}{18pt}

\begin{document}
\date{}

\title[Characterization of domains in $\CC^n$ ]{Characterization of domains in $\CC^n$ by their noncompact automorphism groups}

\author{Do Duc Thai and Ninh Van Thu} 
\maketitle      
\begin{abstract}
In this paper, the characterization of domains in $\CC^n$ by their noncompact automorphism groups
are given. 
\end{abstract}
\tableofcontents

\section{Introduction}  
Let $\Omega$ be a domain, i.e. connected open subset, in a complex manifold $M$. Let the {\it automorphism group} of $\Omega$ (denoted
$Aut(\Omega)$) be the collection of biholomorphic self-maps of $\Omega$ with composition of mappings as its binary operation. The
topology on $Aut(\Omega)$ is that of uniform convergence on compact sets (i.e., the compact-open topology).

One of the important problems in several complex variables is to study
the interplay between the geometry of a domain and  the structure of its
automorphism group. More precisely, we wish to see to what extent a 
domain is determined by its automorphism group.

It is a standard and classical result of H. Cartan that if $\Omega$ is a bounded domain in $\CC^n$ and the automorphism group of $\Omega$ is noncompact then there exist a point $x \in \Omega$, a point $p \in \partial \Omega$, and automorphisms $\varphi_j \in Aut(\Omega)$ such that $\varphi_j(x) \to p$. In this circumstance we call $p$ a {\it boundary orbit accumulation point}. 

Works in the past twenty years has suggested that the local geometry of the so-called ''boundary orbit accumulation point'' $p$ in turn gives global information about the characterization of model of the domain. We refer readers to the recent survey \cite{IK} and the references therein for the development in related subjects. For instance, B. Wong and J. P. Rosay (see \cite{W}, \cite{R}) proved the following theorem.

{\bf Wong-Rosay theorem.} {\it Any bounded domain $\Omega \Subset\CC^n$ with a $C^2$ strongly pseudoconvex boundary orbit accumulation point is biholomorphic to the unit ball in $\CC^n$.  }

By using the scaling technique, introduced by S. Pinchuk \cite {P},   E. Bedford and S. Pinchuk \cite{B-P2} proved the theorem about the characterization of the complex ellipsoids.

{\bf Bedford-Pinchuk theorem.} {\it Let $\Omega\subset \CC^{n+1}$ be a bounded pseudoconvex domain of finite type whose boundary is smooth of class $C^\infty$, and suppose that the Levi  form has rank at least $n-1$ at each point of the boundary. If $Aut(\Omega)$ is noncompact, then $\Omega$ is biholomorphically equivalent to the domain 
$$E_{m}=\{(w, z_1,\cdots, z_n)\in \CC^{n+1}: |w|^2+ |z_1|^{2m}+|z_2|^2+\cdots+|z_n|^2<1\},$$
for some integer $m\geq 1$.}

 We would like to emphasize here that the assumption on boundedness of domains in the above-mentioned theorem is essential in their proofs. It seems to us that some key techniques in their proofs could not use for unbounded domains in $\CC^n$. Thus, there is a natural question that whether the Bedford-Pinchuk theorem is true for any domain in $\CC^n$.
In 1994, F. Berteloot   \cite{Ber2} gave a partial answer to this question in dimension 2. 

{\bf Berteloot theorem.} {\it Let $\Omega$ be a domain in $\CC^2$ and let $\xi_0\in \partial \Omega$. 
Assume that there exists a sequence $(\varphi_p)$ in $Aut(\Omega)$ and a point $a\in \Omega$ such that $\lim\varphi_p(a)=\xi_0$. If  $\partial \Omega$ is pseudoconvex and of finite type near $\xi_0$ then $\Omega$ is biholomorphically equivalent to $\{(w,z)\in \CC^2: \R ew+ H(z,\bar z )<0\}$, where $H$ is a homogeneous subharmonic polynomial on $\CC$ with degree  $2m$.}

The main aim in this paper is to show that the above theorems of Bedford-Pinchuk and Berteloot hold for  domains (not necessary bounded) in  $\CC^n$. Namely, we prove the following.
\begin{theorem} \label{T} Let $\Omega$ be a domain in $\CC^n$ and let $\xi_0\in \partial \Omega $. Assume that
\begin{enumerate}
\item[(a)] $\partial \Omega$ is pseudoconvex, of finite type and smooth of class $C^\infty$ in some neighbourhood of $\xi_0 \in \partial \Omega$. 
\item[(b)] The Levi  form has rank at least $n-2$ at $\xi_0$. 
\item[(c)] There exists a sequence $(\varphi_p)$ in $Aut(\Omega)$ such that $\lim \varphi_p(a)=\xi_0$ for some $a\in \Omega $.
\end{enumerate}  
Then $\Omega$ is biholomorphically equivalent to a domain of the form
$$M_H=\{(w_1,\cdots,w_n)\in \CC^n:Re  w_n+H(w_1,\bar w_1)+\sum_{\alpha=2}^{n-1}|w_\alpha|^2<0\},$$
where $H$ is a homogeneous subharmonic polynomial with $\Delta H\not\equiv 0$.
\end{theorem}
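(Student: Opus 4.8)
The plan is to run S.~Pinchuk's scaling (dilation) method, adapted to the finite-type, Levi-corank-one setting, together with the stability of the Kobayashi metric under scaling.

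\emph{Localization and normal form.} First I would localize near $\xi_0$: since $\partial\Omega$ is pseudoconvex and of finite type near $\xi_0$, it carries local holomorphic peak functions and local plurisubharmonic support functions, so the Kobayashi distance of $\Omega\cap U$ is comparable to that of $\Omega$ near $\xi_0$; this forces, for $p$ large, $\varphi_p(K)\subset U$ for every fixed compact $K\subset\Omega$, and one may thereafter work inside a fixed neighbourhood $U$ of $\xi_0$ (this is the only place the possible unboundedness of $\Omega$ intervenes). Since the Levi form has rank at least $n-2$ at $\xi_0$ there is at most one degenerate complex-tangential direction, say $w_1$ (if the rank is $n-1$, then $\xi_0$ is strongly pseudoconvex and $\Omega$ is biholomorphic to the ball by Wong--Rosay, which is $M_H$ with $H=|w_1|^2$; so assume corank one). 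Using the $C^\infty$ smoothness, pseudoconvexity and finite type $2m$ at $\xi_0$, choose holomorphic coordinates $(w_1,\dots,w_n)$ centred at $\xi_0$ in which a local defining function reads
$$\rho(w)=\mathrm{Re}\,w_n+P(w_1,\bar w_1)+\sum_{\alpha=2}^{n-1}|w_\alpha|^2+(\text{error}),$$
with $P$ a real subharmonic polynomial on $\CC$ of degree $2m$ with no harmonic terms, the error vanishing to higher weighted order (weight $1$ to $w_n$, $1/2m$ to $w_1$, $1/2$ to each $w_\alpha$).

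\emph{Scaling.} Put $q_p=\varphi_p(a)\to\xi_0=0$ and $\delta_p=-\rho(q_p)\to 0^+$. Compose polynomial automorphisms $T_p$ of $\CC^n$ (a translation sending $q_p$ to $(0,\dots,0,-\delta_p)$, followed by a triangular polynomial map absorbing the holomorphic and mixed terms of the Taylor expansion of $\rho$ at the foot of $q_p$ up to weighted order $1$) with the anisotropic dilation $\Lambda_p=\mathrm{diag}(\delta_p^{-1/2m},\delta_p^{-1/2},\dots,\delta_p^{-1/2},\delta_p^{-1})$, and set $\Psi_p=\Lambda_p\circ T_p$. Then the domains $\Omega_p=\Psi_p(\Omega\cap U)$ converge, uniformly on compacta in the local Hausdorff sense, to
$$\Omega_\infty=\Big\{\mathrm{Re}\,w_n+\widehat P(w_1,\bar w_1)+\sum_{\alpha=2}^{n-1}|w_\alpha|^2<0\Big\},$$
with $\widehat P$ a subharmonic polynomial on $\CC$ of degree $\le 2m$ and no harmonic terms; here one must check, using finite type at $\xi_0$, that the $w_1$-exponent may be chosen so that $\widehat P\not\equiv0$.

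\emph{Normal families and biholomorphism.} The key analytic input is the stability of the Kobayashi (equivalently Carath\'eodory) infinitesimal metric: the localization above together with uniform subelliptic estimates for $\Omega\cap U$ near the finite-type point $\xi_0$ yield two-sided bounds of the correct anisotropic shape, uniform in $p$, for the Kobayashi metric of $\Omega_p$, whence $k_{\Omega_p}\to k_{\Omega_\infty}$ on compacta. Consider $\sigma_p=\Psi_p\circ\varphi_p:\Omega\to\Omega_p$. Since $\sigma_p(a)=(0,\dots,0,-1)\in\Omega_\infty$ for all $p$, metric stability gives equicontinuity, so $\{\sigma_p\}$ is normal; passing to a limit $\sigma:\Omega\to\overline{\Omega_\infty}$, a Hurwitz/open-mapping argument plus the non-degeneracy of $k_{\Omega_\infty}$ at $(0,\dots,0,-1)$ shows $\sigma(\Omega)\subset\Omega_\infty$. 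Running the same argument on $\sigma_p^{-1}=\varphi_p^{-1}\circ\Psi_p^{-1}$, defined on an exhaustion of $\Omega_\infty$, gives a limit $\tau:\Omega_\infty\to\Omega$, and $\sigma\circ\tau=\mathrm{id}$, $\tau\circ\sigma=\mathrm{id}$ follow again from metric stability (the Jacobians do not degenerate). Hence $\Omega$ is biholomorphic to $\Omega_\infty$; write $H:=\widehat P$.

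\emph{The polynomial $H$ and the main obstacle.} It remains to see $H$ is homogeneous with $\Delta H\not\equiv0$. If $H$ were harmonic, $H=\mathrm{Re}\,h$ for a holomorphic polynomial $h$, and $w_n\mapsto w_n+h(w_1)$ would turn $M_H$ into a domain for which $0$ is a boundary point of infinite type, contradicting that $H$ inherits the finite type $2m$ of $\xi_0$; so $\Delta H\not\equiv0$. For homogeneity, $\Omega\cong M_H$ forces $Aut(M_H)$ to be noncompact with a boundary orbit accumulating at a point equivalent to $0$; analysing the infinitesimal automorphisms of the rigid polynomial domain $M_H$ (holomorphic vector fields tangent to $\partial M_H$, subject to the identity coming from preservation of $\rho$) one finds that such a noncompact subgroup can exist only if $M_H$ carries the anisotropic dilations $(w_1,w_\alpha,w_n)\mapsto(\lambda^{1/2m}w_1,\lambda^{1/2}w_\alpha,\lambda w_n)$, $\lambda>0$, which preserve $M_H$ exactly when $H$ is homogeneous of degree $2m$. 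I expect the main difficulty to lie in the uniform anisotropic metric estimates underlying the scaling (the correct multitype/weight bookkeeping and subelliptic bounds near $\xi_0$, uniform in $p$, with only corank-one nondegeneracy), and secondarily in the vector-field classification needed to extract homogeneity of $H$.
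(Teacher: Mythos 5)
Your first two stages (localization near $\xi_0$, the anisotropic scaling along the orbit, and the Kobayashi-metric stability plus normal-families argument giving a biholomorphism of $\Omega$ onto a rigid model $M_{\widehat P}$ with $\widehat P\in\P_{2m}$ subharmonic and $\Delta\widehat P\not\equiv0$) follow essentially the same route as the paper: this is the content of Proposition \ref{T:8}, the Catlin--Cho special coordinates and polydiscs, the two-sided estimate $M(\eta,\overrightarrow{X})\approx K_\Omega(\eta,\overrightarrow{X})$ of Theorem \ref{T2}, the normality statement of Theorem \ref{T:5}, and the Cartan-type Proposition \ref{T:7}. Up to the conclusion $\Omega\simeq M_{\widehat P}$ (part (a) of Proposition 4.1) your outline is sound, modulo the substantial technical work you correctly identify.

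The gap is the final step, where you deduce homogeneity of $H$ by ``analysing the infinitesimal automorphisms of $M_H$'' and asserting that a noncompact automorphism group with a boundary orbit accumulation point can exist only if $M_H$ carries the anisotropic dilations. As stated this cannot work: \emph{every} rigid model $M_Q$ with $Q\in\P_{2m}$ has noncompact automorphism group, since the imaginary translations $w\mapsto(w_1,\dots,w_{n-1},w_n+it)$ preserve $M_Q$ for arbitrary rigid $Q$; so noncompactness by itself detects nothing about homogeneity. Your refined premise --- an orbit accumulating at a finite boundary point equivalent to $0$ --- is itself not established: under the biholomorphism $\psi:\Omega\to M_Q$ the orbit $a_p=\psi\circ\varphi_p(\tilde z_0)$ may escape to infinity in $M_Q$ or stay at bounded distance from the boundary, and sorting out these possibilities is precisely the hard second stage of the proof that your sketch compresses into one sentence. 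The paper (following Berteloot) first arranges, by a careful choice of the base point $\tilde z_0$, that every $Q\in\P(\Omega,\tilde z_0)$ has degree exactly $2m$ and splits as $Q=H+R$ with $H\in\H_{2m}$, $\deg R<2m$ (Proposition 4.1(b),(c)); it then distinguishes three cases according to whether $\epsilon_p\to0$ and whether $|a_{1p}|$ stays bounded (Proposition 4.2), performing in the first two cases a second rescaling inside $M_Q$ by the explicit automorphisms $\phi_p$ of $\CC^n$ and passing to the limit via Lemma \ref{L:4}; in the remaining case ($\limsup\epsilon_p>0$) homogeneity is obtained not from scaling at all but from the parabolic one-parameter action $g_t(z)=\psi^{-1}[\psi(z)+(0',it)]$ and the Bedford--Pinchuk holomorphic vector-field analysis, which forces $H=\lambda|z|^{2m}$. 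None of this is supplied by your automorphism-classification sketch, so the homogeneity of $H$ --- the crux of the theorem --- remains unproved in your argument.
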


Notations
\begin{itemize}
\item $\H(\omega, \Omega)$ is the set of holomorphic mappings from $\omega$ to $\Omega.$
\item $f_p$ is u.c.c on $\omega$  means that the sequence $(f_p)$, $f_p\in \H(\omega,\Omega)$, uniformly converges on compact subsets of $\omega.$
\item $\P_{2m}$ is the space of real valued polynomials on $\CC$ with degree less than $2m$ and which do not contain any harmonic terms.
\item $\H_{2m}=\{H\in \P_{2m}\  \text{ such that }\ \deg H=2m  \text{ and }\ H \ \text{ is homogeneous }\\ \text{ and subharmonic }\}.$
\item  $M_Q=\{z\in \CC^n: Re z_n+Q(z_1)+|z_2|^2+\cdots+|z_{n-1}|^2<0\}$ where $Q\in \P_{2m}.$
\item  $\Omega_1 \simeq\Omega_2$ means that $\Omega_1$ and $\Omega_2$ are biholomorphic equivalent.
\end{itemize}
The paper is organized as follows. In Section 2, we review some basic notions needed later. In Section 3, we discribe the construction of polydiscs around points near the boundary of a domain, and give some of their properties. In particular, we use the Scaling method to show that $\Omega$ is biholomorphic to a model $M_P$ with $P\in \P_{2m}$. In Section 4, we end the proof of our theorem by using the Berteloot's method.

{\bf Acknowledgement.} We would like to thank Professor Fran\c{c}ois Berteloot for his precious discusions on this material. Especially, we would like to express our gratitude to the refree. His/her valuable comments on the first version of this paper led to significant improvements.

\section{Definitions and results}
\setcounter{theorem}{0}
\setcounter{nx}{0}
\setcounter{define}{0}
\setcounter{equation}{0}

First of all, we recall the following definition (see \cite{GK}). 
\begin{define} Let $\{\Omega_i\}_{i=1}^\infty$ be a sequence of open sets in a complex manifold $M$ and $\Omega_0 $ be an open set of $M$. The sequence $\{\Omega_i\}_{i=1}^\infty$ is said to converge to $\Omega_0 $, written $\lim\Omega_i=\Omega_0$ iff 
\begin{enumerate}
\item[(i)] For any compact set $K\subset \Omega_0,$ there is a $i_0=i_0(K)$ such that $i\geq i_0$ implies $K\subset \Omega_i$, and 
\item[(ii)] If $K$ is a compact set which is contained in $\Omega_i$ for all sufficiently large $i,$ then  $K\subset \Omega_0$.
\end{enumerate}  
\end{define}

The following proposition is the generalization of the theorem of H. Cartan (see \cite{GK}, \cite{TM} for more generalizations of this theorem).
\begin{proposition} \label{T:7}  Let $\{A_i\}_{i=1}^\infty$ and $\{\Omega_i\}_{i=1}^\infty$ be sequences of domains in a complex manifold $M$ with $\lim A_i=A_0$ and $\lim \Omega_i=\Omega_0$ for some (uniquely determined) domains $A_0$, $\Omega_0$ in $M$. Suppose that $\{f_i: A_i \to \Omega_i\} $ is a sequence of biholomorphic maps. Suppose also that the sequence $\{f_i: A_i\to M \}$ converges uniformly on compact subsets of $ A_0$ to a holomorphic map $F:A_0\to M $ and the sequence $\{g_i:=f^{-1}_i: \Omega_i\to M \}$ converges uniformly on compact subsets of $\Omega_0$ to a holomorphic map $G:\Omega_0\to M $.Then one of the following two assertions holds. 
\begin{enumerate}
\item[(i)] The sequence $\{f_i\}$ is compactly divergent, i.e., for each compact set $K\subset \Omega_0$ and each compact set $L\subset \Omega_0$, there exists an integer $i_0$ such that $f_i(K)\cap L=\emptyset$ for $i\geq i_0$, or
\item[(ii)] There exists a subsequence $\{f_{i_j}\}\subset \{f_i\}$  such that the sequence $\{f_{i_j}\}$ converges uniformly on compact subsets of $A_0$ to a biholomorphic map $F: A_0 \to \Omega_0$.
\end{enumerate}
\end{proposition}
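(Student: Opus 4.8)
The plan is to reduce this to the classical normal-families argument of H. Cartan, adapted to varying domains. First I would observe that the hypotheses are symmetric: we have biholomorphisms $f_i\colon A_i\to\Omega_i$ converging u.c.c.\ on $A_0$ to $F\colon A_0\to M$, and inverses $g_i\colon\Omega_i\to M$ converging u.c.c.\ on $\Omega_0$ to $G\colon\Omega_0\to M$. Suppose $\{f_i\}$ is \emph{not} compactly divergent; I must then produce the subsequence in alternative (ii). Non-divergence means there are compact sets $K\subset A_0$ and $L\subset\Omega_0$, points $p_j\in K$, and indices $i_j\to\infty$ with $f_{i_j}(p_j)\in L$. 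Passing to subsequences, $p_j\to p\in K$ and $f_{i_j}(p_j)\to q\in L$; by u.c.c.\ convergence $F(p)=q\in\Omega_0$. So $F$ hits $\Omega_0$ at an interior point.

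Next I would propagate this: I claim $F(A_0)\subset\overline{\Omega_0}$ always (since for $x\in A_0$ fixed, $x\in A_i$ for large $i$, so $f_i(x)\in\Omega_i$, and any limit point of $\Omega_i$ lies in $\overline{\Omega_0}$ — more carefully, one uses that $f_i(x)$ stays in a compact set once we know $F(x)$ is defined, and condition (ii) of the definition of $\lim\Omega_i=\Omega_0$ to land in $\overline{\Omega_0}$). Combined with the open mapping theorem and the connectedness of $A_0$, the fact that $F$ takes \emph{some} value in the open set $\Omega_0$ forces $F(A_0)\subset\Omega_0$: indeed the set where $F$ maps into $\Omega_0$ — equivalently where $F$ does not map into $\partial\Omega_0$ — can be shown to be open and closed. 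Then for each compact $K\subset A_0$, $F(K)$ is a compact subset of $\Omega_0$, so $F(K)\subset\Omega_i$ for $i$ large; hence $g_i\circ f_i=\mathrm{id}$ makes sense on $K$ for large $i$, and letting $i\to\infty$ (using that $f_i\to F$ u.c.c.\ carries $K$ into a fixed compact subset of $\Omega_0$ on which $g_i\to G$ u.c.c.) gives $G\circ F=\mathrm{id}_{A_0}$.

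By the symmetric argument applied to $\{g_i\}$: either $\{g_i\}$ is compactly divergent, or, after a further subsequence, $G\colon\Omega_0\to A_0$ with $F\circ G=\mathrm{id}_{\Omega_0}$. I would rule out compact divergence of $\{g_i\}$ using $G\circ F=\mathrm{id}_{A_0}$: pick any $x\in A_0$, set $K=\{F(x)\}$ (a compact subset of $\Omega_0$) and $L=\{x\}$; then $g_i(F(x))\to G(F(x))=x$, so $g_i(K)\cap L\neq\emptyset$ for infinitely many $i$, contradicting compact divergence. Hence the second alternative holds for $\{g_i\}$, giving $F\circ G=\mathrm{id}_{\Omega_0}$ as well. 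Therefore $F\colon A_0\to\Omega_0$ is a biholomorphism with inverse $G$, which is assertion (ii) for the original sequence $\{f_i\}$ (the subsequence being the one extracted above).

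The main obstacle I anticipate is the bookkeeping around the two notions of convergence of domains: one must repeatedly invoke part (i) of the definition (compacta of the limit sit inside $\Omega_i$ eventually) to make compositions legitimate, and part (ii) (compacta eventually inside all $\Omega_i$ lie in $\Omega_0$) to confine limit maps, all while chasing subsequences so that the two families $\{f_i\}$ and $\{g_i\}$ are controlled simultaneously. The cleanest route is to first establish $F(A_0)\subset\overline{\Omega_0}$ and $G(\Omega_0)\subset\overline{A_0}$ unconditionally, then show the dichotomy open/divergent via the open mapping theorem; the identity-composition step is then routine. A secondary subtlety is that $A_0$ and $\Omega_0$ are only assumed to be the (unique) limits, so one should not assume any a priori boundedness or relative compactness — everything must be done with the convergence axioms alone.
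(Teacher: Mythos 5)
Your overall architecture coincides with the paper's (non-divergence produces $p$ with $F(p)\in\Omega_0$; then $F(A_0)\subset\Omega_0$; then $G\circ F=\mathrm{id}$ and the symmetric statement; conclude), but there is a genuine gap at the central step, namely your appeal to ``the open mapping theorem.'' A non-constant holomorphic map between equidimensional complex manifolds need not be open: $(z,w)\mapsto(z,zw)$ has non-identically-vanishing Jacobian yet is not open at the origin. Openness of $F$ must be \emph{earned}, and the paper does this with the two ingredients your sketch omits. First, from $G\circ F=\mathrm{id}$ on a neighbourhood $V$ of $p$ one gets that $F|_V$ is injective, hence (Osgood) biholomorphic onto its image, so the Jacobian $J=\det(dF)$ is nonzero on $V$; since each $f_i$ is a biholomorphism, each $J_i=\det(df_i)$ is nowhere zero, and Hurwitz's theorem then forces $J$ to be nowhere zero on all of $A_0$. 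This is the only place the hypothesis that the $f_i$ are \emph{biholomorphisms} (rather than arbitrary holomorphic maps) enters, and your proposal never uses it to control $F$ globally.

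Second, even granting that $F$ is open, your closedness argument for $F^{-1}(\Omega_0)$ does not go through as stated: knowing $F(x)\in\overline{\Omega_0}$ and that $F(U)$ is open does not exclude $F(x)\in\partial\Omega_0$, since an open set can lie in $\overline{\Omega_0}$ while meeting $\partial\Omega_0$ (take $\Omega_0$ a punctured ball). The correct mechanism, which the paper imports as Proposition 5 of Narasimhan, is an image-stability statement: because $F$ is open with discrete fibres, for small $U\ni x$ the images $f_i(U)$ contain a \emph{fixed} open neighbourhood $W$ of $F(x)$ for all large $i$; since $f_i(U)\subset\Omega_i$, condition (ii) of the definition of $\lim\Omega_i=\Omega_0$ then places $W$ (hence $F(x)$) inside $\Omega_0$. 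Your preliminary claim that $F(A_0)\subset\overline{\Omega_0}$ ``always'' is likewise not a formal consequence of the convergence axioms for a single moving point $f_i(x)\in\Omega_i$, and in the correct argument it is not needed. The remainder of your proposal --- deducing $G\circ F=\mathrm{id}_{A_0}$ by composing on compacta, ruling out compact divergence of $\{g_i\}$ via $g_i(F(x))\to x$, and obtaining $F\circ G=\mathrm{id}_{\Omega_0}$ --- is sound and matches the paper's ``repeat the entire argument'' step.
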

\begin{proof} 
Assume that the sequence $\{f_i\}$ is not divergent. Then $F$ maps some point $p$ of $A_0$ into $\Omega_0$. We will show that $F$ is a biholomorphism of   $A_0 $ onto $\Omega_0$. Let $q=F(p).$ Then
$$G(q)=G(F(p))=\lim_{i\to \infty} g_i(F(p))=\lim_{i\to \infty} g_i(f_i(p))=p.$$

Take a neighbourhood $V$ of $p$ in $A_0$ such that $F(V)\subset \Omega_0$. But then uniform convergence allows us to conclude that, for all $z\in V,$ it holds that $G(F(z))=\lim_{i\to \infty} g_i(f_i(z))=z$. Hence $F_{\mid V} $ is injective. By the Osgood's theorem, the mapping $F_{\mid V}: V\to F(V) $ is biholomorphic. 

Consider the holomorphic functions $J_i: A_i\to \CC$ and $J:A_0\to \CC$ given by $J_i(z)=det((df_i)_z) $ and $J(z)=det((dF)_z) $. Then $J(z)\ne 0\ (z\in V)$ and, for each $i=1,2,\cdots$, the function $J_i$ is non-vanishing on $A_i$. Moreover, the sequence $\{J_i\}_{i=0}^\infty$  converges uniformly on compact subsets of $A_0$ to $J$. By Hurwitz's theorem, it follows that $J$ never vanishes. This implies that the mapping $F: A_0\to M$ is open and any $z\in A_0$ is isolated in $F^{-1}(F(z))$. According to Proposition 5 in \cite{N}, we have $F(A_0)\subset \Omega_0$. 

Of course this entire argument may be repeated to see that $G(\Omega_0)\subset A_0$. But then uniform convergence allows us to conclude that, for all $z\in A_0,$ it holds that $G\circ F(z)=\lim_{i\to \infty}g_i(f_i(z))=z$ and likewise for all $w\in \Omega_0$ it holds that $F\circ G(w)=\lim_{i\to \infty}f_i(g_i(w))=w$.    

This proves that $F$ and $G$ are each one-to-one and onto, hence in particular that $F$ is a biholomorphic mapping.  
\end{proof}
 
Next, by Proposition 2.1 in \cite{Ber2}, we have the following.

\begin{proposition}\label{T:8}  Let $M$ be a domain  in a complex manifold $X$ of dimension $n$ and $\xi_0\in \partial M$. Assume that $\partial M$ is pseudoconvex and of finite type near $\xi_0$. 
\begin{enumerate}
\item[(a)] Let $\Omega $ be a domain  in a complex manifold $Y$ of dimension $m$. Then every sequence $\{\varphi_p\}\subset Hol(\Omega,M)$ converges unifomly on compact subsets of $\Omega $ to $\xi_0$ if and only if $\lim   \varphi_p(a)=\xi_0$ for some $a\in \Omega$.
\item[(b)] Assume, moreover, that there exists a sequence $\{\varphi_p\}\subset Aut(M)$  such that $\lim \varphi_p(a)=\xi_0 $ for some $a\in M$. Then $M$ is taut.
\end{enumerate}
\end{proposition}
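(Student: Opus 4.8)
First, the ``only if'' direction of (a) is immediate, since uniform convergence on compacta of $\Omega$ implies convergence at $a$. The substance is the ``if'' direction and part (b); the plan for both is to isolate, from the hypothesis that $\partial M$ is pseudoconvex and of finite type near $\xi_0$, two local facts and then to run a normal--families argument built on the Kobayashi pseudodistance $d^K_M$ of $M$ and its balls $B^K_M(\cdot,\cdot)$. Passing to a holomorphic chart at $\xi_0$ (pseudoconvexity and finite type are biholomorphic invariants), I would first record: \emph{(i)} there are a neighbourhood $U$ of $\xi_0$ and a bounded continuous plurisubharmonic function $\rho$ on $M$ with $\rho<0$ on $M$, $\rho(z)\to0$ as $M\ni z\to\xi_0$, and $\sup_{M\setminus U}\rho<0$, obtained by gluing a local plurisubharmonic peak function at the finite-type pseudoconvex point $\xi_0$ (Forn\ae ss--Sibony) to a negative constant; and \emph{(ii)} the local completeness estimate: there is a neighbourhood $U'$ of $\xi_0$ on which $M\cap U'$ is complete hyperbolic, and such that for each $r>0$ and each sufficiently small neighbourhood $W$ of $\xi_0$, the set $\bigcup_{y\in M\cap W}B^K_M(y,r)$ is relatively compact in $M$ and contained in an arbitrarily small neighbourhood of $\xi_0$. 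Fact \emph{(ii)} is the place where finiteness of type is genuinely used --- it is the standard local completeness / Kobayashi-ball-shrinking behaviour of the metric at a pseudoconvex point of finite type (Berteloot in dimension two, Catlin--Cho and others in general dimension) --- and it is the only nontrivial ingredient.

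For the ``if'' direction of (a), I would argue by contradiction. If $\varphi_p\not\to\xi_0$ uniformly on compacta, then after passing to a subsequence there are a connected compact $K\subset\Omega$ with $a\in K$ and a neighbourhood $V$ of $\xi_0$ such that $\varphi_p(K)\not\subset V$ for every $p$. Since $d^K_\Omega$ is finite and continuous, it is bounded on $K$ by some $R$, and the distance-decreasing property of holomorphic maps gives $\varphi_p(K)\subset B^K_M(\varphi_p(a),R)$. As $\varphi_p(a)\to\xi_0$, fact \emph{(ii)} (with radius $R$) forces $B^K_M(\varphi_p(a),R)\subset V$ for $p$ large, contradicting the choice of $V$. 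One can also keep \emph{(i)} central: the same estimate puts $\varphi_p(\Omega)$ eventually inside the hyperbolic domain $M\cap U$, where $\{\varphi_p\}$ is normal; the functions $\rho\circ\varphi_p$ are then negative plurisubharmonic on $\Omega$ with $(\rho\circ\varphi_p)(a)\to0$, so by the sub-mean value inequality $\rho\circ\varphi_p\to0$ in $L^1_{\mathrm{loc}}$, whence --- using equicontinuity coming again from the finite-type metric bounds --- $\varphi_p\to\xi_0$ uniformly on compacta.

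For (b), recall that $M$ is taut iff every sequence $\{g_q\}\subset\H(\mathbb D,M)$ has a subsequence that either converges uniformly on compacta to an element of $\H(\mathbb D,M)$ or is compactly divergent. First apply part (a) with $\Omega=M$ to the given automorphisms, obtaining $\varphi_p\to\xi_0$ uniformly on compacta of $M$. Now fix a compact $L\subset M$ and $r>0$; choose $W\ni\xi_0$ as in \emph{(ii)} and $p$ so large that $\varphi_p(L)\subset M\cap W$. Since each $\varphi_p$ is a $d^K_M$-isometry, $B^K_M(x,r)=\varphi_p^{-1}\big(B^K_M(\varphi_p(x),r)\big)$ for $x\in L$, hence
\[
\bigcup_{x\in L}B^K_M(x,r)\ \subset\ \varphi_p^{-1}\Big(\overline{\textstyle\bigcup_{y\in M\cap W}B^K_M(y,r)}\Big),
\]
which by \emph{(ii)} and continuity of the homeomorphism $\varphi_p^{-1}$ of $M$ is a compact subset of $M$. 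Consequently, if $\{g_q\}$ is not compactly divergent, then after a subsequence $g_q(0)$ lies in some compact $L\subset M$, and the distance-decreasing property gives $g_q(\{|z|\le s\})\subset\bigcup_{x\in L}B^K_M(x,\tanh^{-1}s)$, a compact subset of $M$ independent of $q$; a Montel and diagonal argument over $s\uparrow1$ then extracts a subsequence of $\{g_q\}$ converging uniformly on compacta to a map $\mathbb D\to M$ (which stays interior, since each truncation does). Hence $M$ is taut.

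The expected main obstacle is precisely fact \emph{(ii)}: everything else is soft, but the propagation ``convergence at one point $\Rightarrow$ uniform convergence on compacta'' in (a), and through it the tautness in (b), rest on the local completeness / Kobayashi-ball-shrinking estimate at the pseudoconvex point of finite type $\xi_0$. In dimension two this is Berteloot's estimate and the statements reduce to his Proposition~2.1 in \cite{Ber2}; for general $n$, and with a complex manifold in place of $\CC^n$, one substitutes the corresponding higher-dimensional finite-type estimates, leaving the rest of the argument unchanged.
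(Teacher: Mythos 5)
Your overall strategy---extract from pseudoconvexity and finite type at $\xi_0$ a local plurisubharmonic peak function and a local hyperbolicity/tautness statement, then run a normal-families argument on Kobayashi balls---is exactly the route the paper takes; the difference is that the paper merely verifies these two hypotheses (peak function via Cho, local tautness via hyperconvexity of $B\cap M$) and then cites Berteloot's Proposition 2.1, whereas you reconstruct that proposition's proof. Your part (a) is sound: the attraction property of Kobayashi balls centered near a local peak point, combined with the distance-decreasing property of $\varphi_p$, does propagate convergence at the single point $a$ to uniform convergence on compacta.

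Part (b), however, rests on a clause of your fact \emph{(ii)} that is false as stated, and that clause is used essentially. The set $\bigcup_{y\in M\cap W}B^K_M(y,r)$ contains $M\cap W$ itself (each $y$ lies in its own ball), and $M\cap W$ accumulates at $\xi_0\in\partial M$; hence this union is never relatively compact in $M$, however small $W$ is. Its closure is therefore either non-compact (if taken in $M$) or not contained in $M$ (if taken in the ambient manifold), and in either case $\varphi_p^{-1}$ of it is not a compact subset of $M$, so your displayed inclusion does not yield relative compactness of $\bigcup_{x\in L}B^K_M(x,r)$. The repair is to take the union only over the \emph{compact} set $\varphi_p(L)\subset M\cap W$ and to prove $\bigcup_{y\in\varphi_p(L)}B^K_M(y,r)\Subset M$ by combining the attraction property (these balls lie in $M\cap V$ for a small $V$) with a localization of the Kobayashi distance near the peak point ($d^K_M\gtrsim d^K_{M\cap B}$ on $M\cap V$) and the local complete hyperbolicity of $M\cap B$ furnished by the finite-type metric estimates; the homeomorphism $\varphi_p^{-1}$ then carries this relatively compact set to a relatively compact set and your argument resumes. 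Alternatively, follow Berteloot's actual scheme: for a non--compactly-divergent sequence $g_q$ push forward by a fixed large $\varphi_p$ so that $\varphi_p\circ g_q(\overline{D}_s)\subset M\cap B$, use tautness of $M\cap B$ there, and pull back by $\varphi_p^{-1}$. Either way, the relative-compactness statement must concern balls centered on a compact subset of $M\cap W$, not on all of $M\cap W$.
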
  

\begin{proof}
Since $\partial M$ is pseudoconvex and of finite type near $\xi_0\in \partial M$, there exists a local  peak plurisubharmonic function at $\xi_0 $ (see \cite{Cho}). Moreover,  since $\partial M $ is smooth and pseudoconvex near $\xi_0$, there exists a small ball $B$ centered at $\xi_0$ such that $B\cap M $ is hyperconvex and therefore is taut. The theorem  is deduced from  Proposition 2.1 in \cite{Ber2}.     
\end{proof}

\begin{nx} \label{R:3} By Proposition \ref{T:8} and by the hypothesis of Theorem \ref{T}, for each compact subset $K\subset M$ and each neighbourhood $U$ of $\xi_0$, there exists an integer $p_0$ such that $\varphi_p(K)\subset M\cap U $ for every $p\geq p_0$
\end{nx}

\begin{nx} \label{R:4} By Proposition \ref{T:8} and by the hypothesis of Theorem \ref{T},  $M$ is taut.
\end{nx}

The following lemma  is a slightly modification of Lemma 2.3 in \cite{Ber2}. 

\begin{Lemma}\label{L:4}
Let $\sigma_\infty$ be a subharmonic function of class $C^2$ on $\CC$ such that $\sigma_\infty(0)=0 $ and $\int_\CC \bar\partial \partial\sigma_\infty =+\infty$. Let $(\sigma_k)_k$ be a sequence of subharmonic functions on $\CC$ which  converges uniformly on compact subsets of $\CC$ to $\sigma_\infty$. Let $\omega$ be any domain in a complex manifold of dimension $m\ (m\geq 1)$ and let  $z_0$ be fixed in $\omega$. Denote by $M_k$ the domain in $ \CC^n$ defined by 
$$M_k=\{(z_1,z_2,\cdots, z_n)\in \CC^n: Imz_1+\sigma_k(z_2)+|z_3|^2+\cdots+|z_n|^2<0\}.$$ 

Then any sequence $h_k\in Hol(\omega, M_k)$ such that $\{h_k(z_0), k\geq 0\}\Subset M_\infty $ admits some subsequence which converges uniformly on compact subsets of $\omega$ to some element of $Hol(\omega, M_\infty)$. 
\end{Lemma}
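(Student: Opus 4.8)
The plan is to reduce to a normal-families argument and then identify the limit domain $M_\infty$. First I would set up the limit domain explicitly: since $\sigma_k \to \sigma_\infty$ uniformly on compact subsets, the defining functions $\rho_k(z) = \operatorname{Im} z_1 + \sigma_k(z_2) + |z_3|^2 + \cdots + |z_n|^2$ converge uniformly on compacta to $\rho_\infty$, and because each $\sigma_k$ is subharmonic (hence $\rho_k$ is plurisubharmonic) the domains $M_k$ are pseudoconvex; moreover $\lim M_k = M_\infty$ in the sense of the kernel convergence of Definition 2.1. The hypothesis $\sigma_\infty(0)=0$ guarantees that the fixed reference point (for instance a point with $z_2=0$, $z_3=\cdots=z_n=0$ and $\operatorname{Im} z_1<0$) lies in all $M_k$ for $k$ large, so the set-up of Proposition \ref{T:7} is available once we know tautness. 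The role of the condition $\int_\CC \bar\partial\partial\sigma_\infty = +\infty$ is exactly to rule out the degenerate case where $M_\infty$ splits off a $\CC$-factor in the $z_2$ direction: it forces $M_\infty$ to be Kobayashi hyperbolic, in fact taut, so that the alternative (compact divergence) in the Montel-type statement can be excluded using the hypothesis $\{h_k(z_0)\}\Subset M_\infty$.

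The key steps, in order, are: (1) Show $M_\infty$ is taut. This follows because $\rho_\infty$ is plurisubharmonic and, by the divergence-of-Laplacian hypothesis, $\sigma_\infty$ is a nonconstant subharmonic function whose Laplacian has infinite mass, which is precisely the hypothesis under which Berteloot (Lemma 2.3 in \cite{Ber2}) shows the associated model is taut/hyperbolic; alternatively one exhibits a bounded strictly plurisubharmonic exhaustion near the boundary using a local peak function at each boundary point as in the proof of Proposition \ref{T:8}. (2) Invoke tautness of each $M_k$: each $M_k$ is pseudoconvex with plurisubharmonic defining function, hence (after intersecting with a large ball near the relevant boundary point) hyperconvex and taut, uniformly enough that the family $\{h_k\}$ is locally bounded. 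Concretely, since $h_k(z_0)$ stays in a compact $L \Subset M_\infty$, and $\lim M_k = M_\infty$, for large $k$ the point $h_k(z_0)$ lies in a fixed compact subset of $M_k$; tautness of $M_k$ (or a uniform estimate on the Kobayashi metric coming from the uniform plurisubharmonic data) then bounds $h_k$ on any compact subset of $\omega$ away from the boundaries of the $M_k$. (3) Apply Montel to extract a subsequence $h_{k_j}$ converging uniformly on compacta of $\omega$ to some holomorphic $h_\infty : \omega \to \CC^n$. (4) Identify the target: by kernel convergence $\lim M_k = M_\infty$ and the construction of Definition 2.1, the image $h_\infty(\omega)$ cannot escape to $\partial M_\infty$ — here one uses the tautness of $M_\infty$ exactly as in Proposition \ref{T:7}, together with the fact that $h_\infty(z_0) \in M_\infty$, to conclude $h_\infty(\omega) \subset M_\infty$, i.e. $h_\infty \in Hol(\omega, M_\infty)$.

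For step (4) the cleanest route is to argue by contradiction: if $h_\infty$ were not into $M_\infty$, then (since $\omega$ is connected and $h_\infty$ holomorphic) either $h_\infty(\omega)\subset \partial M_\infty$ or $h_\infty(\omega)$ meets $\partial M_\infty$ at an interior value, and in either case composing with a local peak plurisubharmonic function at that boundary point of $M_\infty$ — which exists by the pseudoconvexity and finite-type hypotheses, cf. the proof of Proposition \ref{T:8} — yields a plurisubharmonic function on $\omega$ attaining an interior maximum, hence constant, which together with $h_\infty(z_0)\in M_\infty$ is a contradiction; this is the standard argument underlying the tautness of $M_\infty$. Thus $h_\infty\in Hol(\omega,M_\infty)$, completing the proof.

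The main obstacle I expect is not any one of these steps individually but rather making the local boundedness in step (2) genuinely \emph{uniform in $k$}: one needs that the family $\{h_k\}$ does not run off to infinity or to the varying boundaries $\partial M_k$, and for this the key input is a lower bound on the Kobayashi (or a suitable Sibony) metric of $M_k$ on fixed compact sets that is uniform in $k$. This is obtained from the uniform convergence $\sigma_k\to\sigma_\infty$ together with the hypothesis $\int_\CC\bar\partial\partial\sigma_\infty=+\infty$: the latter gives a genuine (not identically zero) amount of Levi curvature in the limit, which by uniform convergence persists with a definite size at the relevant scale for all large $k$, yielding the needed uniform hyperbolicity estimate. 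Once that uniform estimate is in hand — essentially a quantitative version of the tautness in Lemma 2.3 of \cite{Ber2} — the Montel extraction and the limit identification are routine.
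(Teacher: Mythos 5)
The paper does not actually prove this lemma: it is stated as a ``slightly modification of Lemma 2.3 in \cite{Ber2}'' and the proof is delegated wholesale to Berteloot, so your proposal has to be measured against that argument rather than against anything in the text. Your outline has the right architecture (eventual inclusion of compacta of $M_\infty$ into $M_k$, a uniform normality estimate, extraction of a limit, and a maximum-principle argument to land in the open domain), but the central analytic step is asserted rather than proved. The domains $M_k$ are unbounded -- $\mathrm{Re}\,z_1$ is completely unconstrained -- so Montel's theorem does not apply and ``staying away from $\partial M_k$'' is not local boundedness; the entire content of the lemma is the uniform-in-$k$ normality, and ``the Levi curvature persists with a definite size at the relevant scale'' is not an argument. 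What is actually needed (and what Berteloot supplies) is quantitative: from $\int_\CC\bar\partial\partial\sigma_\infty=+\infty$ and the weak convergence of the Riesz measures $\Delta\sigma_k\to\Delta\sigma_\infty$ one extracts a disc $D_R$ with $\int_{D_R}\Delta\sigma_k\geq c>0$ uniformly for large $k$; this is used to make the second components $h_k^2$ uniformly normal, after which $\sigma_k(h_k^2)$ is bounded on compacta, $\mathrm{Im}\,h_k^1$ is controlled above by $-\sigma_k(h_k^2)-\sum_{j\geq3}|h_k^j|^2$ and below by Harnack applied to the positive harmonic function $C-\mathrm{Im}\,h_k^1$, the components $h_k^j$ $(j\geq 3)$ are then bounded by $|h_k^j|^2<-\mathrm{Im}\,h_k^1-\sigma_k(h_k^2)$, and $h_k^1$ is normal because it maps into a half-plane (a taut domain) with $h_k^1(z_0)$ in a fixed compact set. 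None of this cascade appears in your sketch, and it is precisely where the hypothesis on $\sigma_\infty$ is consumed.

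Two further points. First, you twice invoke local peak plurisubharmonic functions ``by the pseudoconvexity and finite-type hypotheses, cf.\ Proposition \ref{T:8}'', but the lemma has no finite-type hypothesis: $\sigma_\infty$ is merely a $C^2$ subharmonic function with infinite total Laplacian mass, and $\partial M_\infty$ may contain infinite-type or even Levi-flat points, so such peak functions need not exist. They are also unnecessary: for your step (4) the global plurisubharmonic defining function $\rho_\infty$ does the job, since $\rho_k\circ h_k<0$ passes to the limit to give $\rho_\infty\circ h_\infty\leq 0$ on $\omega$, and the maximum principle together with $\rho_\infty(h_\infty(z_0))<0$ forces strict negativity everywhere. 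Second, your claim that $\lim M_k=M_\infty$ in the sense of Definition 2.1 is false in general (take $\sigma_k=\sigma_\infty-1/k$: a boundary point of $M_\infty$ then lies in every $M_k$, violating condition (ii)); only the half of that definition you actually use -- that compacta of $M_\infty$ are eventually contained in $M_k$ -- is true and needed.
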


\section{Estimates of Kobayashi metric of the domains in $\CC^n$ }
\setcounter{theorem}{0}
\setcounter{nx}{0}
\setcounter{define}{0}
\setcounter{equation}{0}
In this section we use the Catlin's argument in \cite{Cat} to study special coordinates and polydiscs. After that, we improve Berteloot's technique in \cite{Ber3} to construct a dilation sequence, estimate the Kobayashi metric and prove the normality of a family of holomorphic mappings.
\subsection{Special Coordinates and Polydiscs}
Let $\Omega$ be a domain in $\CC^n$. Suppose that $\partial \Omega $ is pseudoconvex, finite type and is smooth of class $C^\infty$ near a boundary point $\xi_0\in \partial \Omega$ and suppose that the Levi form has rank at least $n-2$ at $\xi_0$. We may assume that $\xi_0=0$ and the rank of Levi form at $\xi_0$ is exactly $n-2$. Let $r$ be a smooth definning function for  $\Omega $. Note that the type $m$ at $\xi_0$ is an even integer in this case. We also assume that $\dfrac{\partial r}{\partial z_n}(z)\ne 0$ for all $z$ in a small neighborhood $U$ about $\xi_0$. After a linear change of coordinates, we can find cooordinate functions $z_1,\cdots, z_n$ defined on $U$ such that 
\begin{equation}\label{Eq1} 
\begin{split}
L_n=\frac{\partial }{\partial z_n},\ L_j=\frac{\partial }{\partial z_j}+b_j\frac{\partial }{\partial z_n},\ L_j r\equiv 0,\ b_j(\xi_0)=0,\ j=1,\cdots, n-1,
\end{split}
\end{equation}
which form a basis of $\CC T^{(1,0)}(U)$ and satisfy
\begin{equation}\label{Eq2} 
\begin{split}
\partial \bar \partial r(q)(L_i,\bar L_j)=\delta_{ij},\  2\leq i,j\leq n-1,
\end{split}
\end{equation}
where $\delta_{ij}=1 $ if $i=j$ and $\delta_{ij}=0$ otherwise.

We want to show that about each point $z'=({z'}_1,\cdots,{z'}_n)$ in $U$, there is a polydisc of maximal size on which the function $r(z)$ changes by no more than some prescribed small number $\delta$. First, we construct the coodinates about $z'$ introduced by S. Cho (see also in \cite{Cho}). These coodinates will be used to define the polydisc.

Let us take the coordinate functions $z_1,\cdots, z_n$ about $\xi_0$ so that (\ref{Eq2}) holds. Therefore $|L_n r(z)|\geq c>0$ for all $z\in U$, and $\partial\bar\partial r(z)(L_i, \bar L_j)_{2\leq i,j\leq n-1}$ has $(n-2)$-positive eigenvalues in $U$ where 
\begin{equation*}
\begin{split}
L_n&=\frac{\partial }{\partial z_n},\quad \text{and}\\
 L_j&=\frac{\partial }{\partial z_j}-(\frac{\partial r}{\partial z_n})^{-1}\frac{\partial r(z') }{\partial z_j}\frac{\partial }{\partial z_n},\ j=1,\cdots, n-1.
\end{split}
\end{equation*}

For each $z'\in U$, define new coordinate functions $u_1,\cdots,u_n$ defined by $z=\varphi_1(u)$
\begin{equation*}
\begin{split}
z_n&={z'}_n+u_n- \sum_{j=1}^{n-1}\big[(\frac{\partial r}{\partial z_n})^{-1}\frac{\partial r(z') }{\partial z_j}\big ] u_j,\\
 z_j&={z'}_j+u_j,\ j=1,\cdots, n-1.
\end{split}
\end{equation*}

Then $L_j$ can be written as
$L_j=\frac{\partial }{\partial u_j}+{b'}_j\frac{\partial }{\partial u_n},\ j=1,\cdots, n-1,$ where ${b'}_j(z')=0$. In $u_1,\cdots, u_n$ coordinates, $A=\Big (\dfrac{\partial^2 r(z')}{\partial u_i\partial \bar u_j}\Big )_{2\leq i,j\leq n-1}$ is an Hermitian matrix and there is a unitary matrix $P=\big (P_{ij}\big )_{2\leq i,j\leq n-1}$ such that $P^* AP=D$, where $D$ is a diagonal matrix whose entries are positive eigenvalues of $A$.

Define $u=\varphi_2(v)$ by 
\begin{equation*}
\begin{split}
u_1&=v_1, \ u_n=v_n,\quad \text{and}\\
 u_j&=\sum_{k=2}^{n-1}\bar P_{jk} v_k,\ j=2,\cdots, n-1,
\end{split}
\end{equation*}
Then $\dfrac{\partial^2 r(z')}{\partial v_i\partial \bar v_j}=\lambda_i\delta_{ij},\ 2\leq i,j\leq n-1$, where $\lambda_i>0$ is an $i$-th entry of $D$ (we may assume that $\lambda_i\geq c >0$ in $U$ for all $i$). Next we define $v=\varphi_3(w)$ by 
\begin{equation*}
\begin{split}
v_1&=w_1, \ v_n=w_n,\quad \text{and}\\
 v_j&=\lambda_j w_j,\ j=2,\cdots, n-1,
\end{split}
\end{equation*}
Then $\dfrac{\partial^2 r(z')}{\partial w_i\partial \bar w_j}=\delta_{ij},\ 2\leq i,j\leq n-1$ and $r(w)$ can be written as 
\begin{equation}\label{Eq4} 
\begin{split}
r(w)&=r(z')+Re w_n+\sum_{\alpha=2}^{n-1}\sum_{1\leq j\leq \frac{m}{2}} Re \big [(a^\alpha_j w_1^j+b^\alpha_j \bar w_1^j)w_\alpha\big ]+Re \sum_{\alpha=2}^{n-1}c_\alpha w_\alpha^2\\
&+ \sum_{2\leq j+k\leq m} a_{j,k}w_1^j \bar  w_1^k+\sum_{\alpha=2}^{n-1}|w_\alpha|^2+ \sum_{\alpha=2}^{n-1} \sum_{\substack{j+k\leq \frac{m}{2}\\
 j,k>0}} Re( b^\alpha_{j,k} w_1^j \bar w_1^k  w_\alpha)\\
&+O(|w_n| |w|+|w^*|^2|w|+|w^*|^2|w_1|^{\frac{m}{2}+1}+|w_1|^{m+1}),
\end{split}
\end{equation}
where $w^*=(0,w_2,\cdots,w_{n-1},0)$. It is standard to perform the change of coordinates $w=\varphi_4(t)$
\begin{equation*}
\begin{split}
w_n&=t_n-\sum_{2\leq k\leq m}\frac{2}{k!} \frac{\partial^k r(0) }{\partial w_1^k} t_1^k -\sum_{\alpha=2}^{n-1}\sum_{1\leq k\leq m/2}\frac{2}{(k+1)!} \frac{\partial^{k+1} r(0) }{\partial w_\alpha \partial w_1^k} t_\alpha t_1^k-\sum_{\alpha=2}^{n-1} \frac{\partial^2 r(0) }{\partial w_\alpha^2}t_\alpha^2, \\
 w_j&=t_j,\ j=1,\cdots, n-1,
\end{split}
\end{equation*}
which serves to remove the pure terms from $(\ref{Eq4})$, i.e., it removes $w_1^k, \bar w_1^k, w^2_\alpha $ terms as well as $w_1^k w_\alpha , \bar w_1^k \bar w_\alpha$ terms from the summation in (\ref{Eq4}). 

We may also perform a change of coordinates $t=\varphi_5(\zeta)$ defined by
\begin{equation*}
\begin{split}
t_1&=\zeta_1,\ t_n=\zeta_n,\\
 t_\alpha &=\zeta_\alpha-\sum_{1\leq k\leq \frac{m}{2}}\frac{1}{(k+1)!} \frac{\partial^{k+1} r(0) }{\partial \bar t_\alpha\partial t_1^k} \zeta_1^k,\ \alpha=2,\cdots, n-1
\end{split}
\end{equation*}
to remove terms of the form $\bar w_1^j w_\alpha$ from the summation in (\ref{Eq4}) and hence $r(\zeta)$ has the desired expression as in (\ref{Eq3}) in $\zeta$-coordinates.

Thus, we obtain the following Proposition (see also  in \cite[Prop. 2.2, p. 806]{Cho1}). 
\begin{proposition}[S. Cho] \label{P31} 
For each $z' \in U$ and positive even integer $m$, there is a biholomorphism $\Phi_{z'}:\CC^n\to \CC^n, \ z=\Phi_{z'}^{-1}(\zeta_1,\cdots,\zeta_n) $ such that  
\begin{equation}\label{Eq3} 
\begin{split}
r(\Phi_{z'}^{-1}(\zeta))&=r(z')+Re \zeta_n+ \sum_{\substack{j+k\leq m\\
 j,k>0}} a_{jk}(z')\zeta_1^j \bar \zeta_1^k\\
&+\sum_{\alpha=2}^{n-1}|\zeta_\alpha|^2+ \sum_{\alpha=2}^{n-1}Re (( \sum_{\substack{j+k\leq \frac{m}{2}\\
 j,k>0}} b^\alpha_{jk}(z')\zeta_1^j \bar \zeta_1^k)\zeta_\alpha)\\
&+O(|\zeta_n| |\zeta|+|\zeta^*|^2|\zeta|+|\zeta^*|^2|\zeta_1|^{\frac{m}{2}+1}+|\zeta_1|^{m+1}),
\end{split}
\end{equation}
where $\zeta^*=(0,\zeta_2,\cdots,\zeta_{n-1},0)$.
\end{proposition}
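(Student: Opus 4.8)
The proof of Proposition \ref{P31} is essentially a bookkeeping argument: one carries out the five explicit changes of coordinates $\varphi_1,\dots,\varphi_5$ described above and checks that, after composing them, the defining function takes the stated normal form \eqref{Eq3}. The plan is to organize this as follows.

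\textbf{Step 1: Setting up the Taylor expansion.} First I would Taylor-expand $r$ about the point $z'$ to order $m+1$ in the ambient holomorphic coordinates, using that $\partial r/\partial z_n(z')\neq 0$ to solve for the normal direction. The coordinate changes $\varphi_1,\varphi_2,\varphi_3$ are purely linear (in fact $\varphi_1$ straightens the vector fields $L_j$ so that $b_j'(z')=0$, while $\varphi_2$ diagonalizes and $\varphi_3$ normalizes the Levi form on the $(n-2)$-dimensional strongly pseudoconvex block). After these three changes the function $r$ has the intermediate expression \eqref{Eq4}: the $\sum_{\alpha}|w_\alpha|^2$ term has been put in standard form, the normal term is $\operatorname{Re} w_n$, and all remaining lower-order terms are collected into the displayed sums plus an error $O(|w_n||w|+|w^*|^2|w|+|w^*|^2|w_1|^{m/2+1}+|w_1|^{m+1})$. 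The point here is simply to track which monomials can survive: terms that are not $O(|w_1|^{m+1})$ or of the two mixed error types must be either pure in $w_1$ (degree $\le m$), pure quadratic in $w_\alpha$, or of the mixed types $w_1^jw_\alpha$, $\bar w_1^j\bar w_\alpha$, $w_1^j\bar w_1^k w_\alpha$.

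\textbf{Step 2: Killing the pure and semi-pure terms.} The change $\varphi_4$ is a holomorphic shear in the $w_n$-variable; absorbing the holomorphic polynomial $\sum_k \frac{2}{k!}\frac{\partial^k r(0)}{\partial w_1^k}t_1^k + \sum_{\alpha,k}\cdots + \sum_\alpha \frac{\partial^2 r(0)}{\partial w_\alpha^2}t_\alpha^2$ into $t_n$ and taking real parts removes exactly the holomorphic monomials $w_1^k$ ($2\le k\le m$), $w_\alpha^2$, and $w_1^kw_\alpha$ from \eqref{Eq4} — because $\operatorname{Re} w_n = \operatorname{Re} t_n - \operatorname{Re}(\text{that polynomial})$, and the polynomial was chosen to match precisely those pure terms. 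One then checks that the newly generated terms (coming from the error $O(|w_n||w|)$ after substituting $w_n = t_n - (\text{polynomial in } t_1, t_\alpha)$) are still absorbed into the error class, which holds because the polynomial is $O(|t_1|^2 + |t^*||t_1| + |t^*|^2)$. Finally $\varphi_5$ is a holomorphic shear in the $w_\alpha$-variables designed to remove the remaining mixed anti-holomorphic terms $\bar w_1^j w_\alpha$; again one verifies the error terms it introduces are of the allowed form. After $\varphi_5$ the only surviving non-error monomials are $\sum_{j,k>0,\,j+k\le m} a_{jk}(z')\zeta_1^j\bar\zeta_1^k$, $\sum_\alpha|\zeta_\alpha|^2$, and $\sum_\alpha \operatorname{Re}\big((\sum_{j,k>0,\,j+k\le m/2} b^\alpha_{jk}(z')\zeta_1^j\bar\zeta_1^k)\zeta_\alpha\big)$, which is exactly \eqref{Eq3}.

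\textbf{Main obstacle.} The genuinely delicate point — and the part that requires care rather than routine algebra — is verifying that each of the shears $\varphi_4,\varphi_5$ does \emph{not} disturb the terms already normalized and that all the cross-terms it generates genuinely fall into the error class $O(|\zeta_n||\zeta|+|\zeta^*|^2|\zeta|+|\zeta^*|^2|\zeta_1|^{m/2+1}+|\zeta_1|^{m+1})$. In particular one must check the degree bookkeeping: the shear coefficients in $\varphi_4$ are polynomials in $t_1$ of degree up to $m$ and in $t_\alpha$ of degree one, so substituting into the $O(|w_n||w|)$ error could a priori produce a term like $|t_1|^{m+1}$ or $|t^*|^2|t_1|^{m/2+1}$ but nothing worse, and substituting into $\sum|w_\alpha|^2$ under $\varphi_5$ (whose $\zeta_\alpha$-shift is $O(|\zeta_1|)$ through degree $m/2$) produces terms absorbed by $|\zeta^*|^2|\zeta_1| + |\zeta_1|^{m}$-type bounds — which is why the error exponents in \eqref{Eq3} are chosen exactly as they are. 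Since the computation is local and uniform in $z'\in U$ (the eigenvalues $\lambda_i$ and the quantity $|L_nr|$ are bounded below on $U$), all coefficient functions $a_{jk}(z'), b^\alpha_{jk}(z')$ are smooth in $z'$, and the biholomorphism $\Phi_{z'}=\varphi_5^{-1}\circ\varphi_4^{-1}\circ\varphi_3^{-1}\circ\varphi_2^{-1}\circ\varphi_1^{-1}$ depends smoothly on $z'$ as well. This gives \eqref{Eq3} and completes the proof.
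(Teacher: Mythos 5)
Your proposal is correct and follows essentially the same route as the paper, which constructs $\Phi_{z'}$ as the composition of the five explicit changes $\varphi_1,\dots,\varphi_5$ (straightening the $L_j$, diagonalizing and normalizing the Levi block, then the two shears removing pure and semi-pure terms) and defers the detailed error bookkeeping to Cho's original argument. Your identification of the delicate point --- checking that the shears do not disturb the normalized terms and that the generated cross-terms stay in the error class --- is exactly where the substance of the verification lies.
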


\begin{nx} The coordinate changes as above are unique and hence the map $\Phi_{z'}$ is defined uniquely.  
\end{nx}

We now show how to define the polydisc around $z'$. Set 
\begin{equation}\label{Eq5}
\begin{split} 
A_l(z')&=\max \{|a_{j,k}(z')|, \ j+k=l\}\ (  2\leq l \leq m),\\
B_{l'}(z')&=\max \{|b^\alpha_{j,k}(z')|, \ j+k=l',\ 2\leq \alpha\leq n-1\}\ (  2\leq l' \leq \frac{m}{2}).
\end{split}
\end{equation}
For each $\delta>0$, we define $\tau(z',\delta)$ as follows 
\begin{equation}\label{Eq6} 
\tau(z',\delta)=\min \{\big( \delta/A_l(z') \big)^{1/l},\ \big( \delta^{\frac{1}{2}}/B_{l'}(z') \big)^{1/{l'}},\ 2\leq l \leq m,\ 2\leq l' \leq \frac{m}{2} \}.
\end{equation}

Since the type of $\partial \Omega$ at $\xi_0$ equals $m$ and the Levi form has rank at least $n-2$ at $\xi_0$, $A_m(\xi_0)\ne 0$. Hence if $U$ is sufficiently small, then $|A_m(z')|\geq c>0$ for all $z'\in U$. This gives  the inequality
\begin{equation}\label{Eq7} 
\delta^{1/2}\lesssim \tau(z',\delta)  \lesssim \delta^{1/m}\  (z'\in U).
\end{equation}

The definition of $\tau(z',\delta)$ easily implies that if $\delta'<\delta''$, then  
  \begin{equation}\label{Eq8} 
(\delta'/\delta'')^{1/2}\tau(z',\delta'')  \leq \tau(z',\delta') \leq  (\delta'/\delta'')^{1/m}\tau(z',\delta'').
\end{equation}

Now set $\tau_1(z',\delta)=\tau(z',\delta)=\tau,\ \tau_2(z',\delta)=\cdots=\tau_{n-1}(z',\delta)=\delta^{1/2}$, $\tau_n(z',\delta)=\delta$ and define
\begin{equation}\label{Eq9} 
R(z',\delta)=\{\zeta\in \CC^n: |\zeta_k|<\tau_k(z',\delta), k=1,\cdots,n\}
\end{equation}
and 
\begin{equation}\label{Eq10} 
Q(z',\delta)=\{\Phi^{-1}_{z'}(\zeta):\zeta\in R(z',\delta)\}.
\end{equation}

In the sequal we denote $D_k^l$ any partial derivative operator of the form $\frac{\partial }{\partial \zeta^\mu_k}\frac{\partial }{\partial \bar\zeta^\nu_k}$, where $\mu+\nu=l,\ k=1,2,\cdots,n$.

In order to prove the homogeneous property of $Q(z',\delta)$ we need two lemmas.
\begin{Lemma} \cite[Prop. 2.3, p. 807]{Cho1}. \label{L32}
Let $z'$ be an arbitrary point in $U$. Then the function $\rho (\zeta)=r(\Phi^{-1}_{z'}(\zeta))$ satisfies
\begin{equation}\label{Eq11} 
\begin{split}
&|\rho(\zeta)-\rho(0)|\lesssim \delta \\
&|D^i_k D_1^l \rho(\zeta)|\lesssim \delta \tau_1(z',\delta)^{-l}\tau_k(z',\delta)^{-i} ,
\end{split}
\end{equation}
 for $\zeta\in R(z',\delta)$ and $ l+\frac{im}{2}\leq m, \ i=0,1;\  k=2,\cdots ,n-1$.
\end{Lemma}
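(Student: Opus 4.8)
\textbf{Proof proposal for Lemma \ref{L32}.}

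The plan is to prove these estimates directly from the explicit normal form \eqref{Eq3} for $\rho(\zeta)=r(\Phi_{z'}^{-1}(\zeta))$, exploiting the definition \eqref{Eq6} of $\tau(z',\delta)$ so that each monomial is controlled by $\delta$ on the polydisc $R(z',\delta)$. First I would isolate the leading block of \eqref{Eq3}: the terms $\sum_{j+k\le m,\,j,k>0} a_{jk}(z')\zeta_1^j\bar\zeta_1^k$, the terms $\sum_\alpha |\zeta_\alpha|^2$, and the mixed terms $\sum_\alpha \mathrm{Re}\big((\sum_{j+k\le m/2,\,j,k>0} b^\alpha_{jk}(z')\zeta_1^j\bar\zeta_1^k)\zeta_\alpha\big)$; the term $\mathrm{Re}\,\zeta_n$ and the error $O(|\zeta_n||\zeta|+|\zeta^*|^2|\zeta|+|\zeta^*|^2|\zeta_1|^{m/2+1}+|\zeta_1|^{m+1})$ are handled separately. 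On $R(z',\delta)$ we have $|\zeta_1|<\tau$, $|\zeta_\alpha|<\delta^{1/2}$ for $2\le\alpha\le n-1$, and $|\zeta_n|<\delta$. For the first estimate $|\rho(\zeta)-\rho(0)|\lesssim\delta$: a monomial $|a_{jk}(z')|\,|\zeta_1|^{j+k}$ is at most $A_{j+k}(z')\tau^{j+k}\le A_{j+k}(z')\big(\delta/A_{j+k}(z')\big)=\delta$ by \eqref{Eq5}–\eqref{Eq6}; a mixed monomial $|b^\alpha_{jk}(z')|\,|\zeta_1|^{j+k}|\zeta_\alpha|\le B_{j+k}(z')\tau^{j+k}\delta^{1/2}\le B_{j+k}(z')\big(\delta^{1/2}/B_{j+k}(z')\big)\delta^{1/2}=\delta$; the terms $|\zeta_\alpha|^2<\delta$; and every term of the error is $O(\delta)$ as well since, e.g., $|\zeta_n||\zeta|\lesssim\delta\cdot\delta^{1/m}$, $|\zeta^*|^2|\zeta|\lesssim\delta\cdot\delta^{1/m}$, $|\zeta^*|^2|\zeta_1|^{m/2+1}\lesssim\delta\cdot\tau\lesssim\delta^{1+1/m}$, and $|\zeta_1|^{m+1}\le\tau^{m+1}\lesssim\delta^{(m+1)/m}$, using \eqref{Eq7}. (One also absorbs $\mathrm{Re}\,\zeta_n$, which is $O(\delta)$.)

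For the derivative estimates $|D_k^i D_1^l\rho(\zeta)|\lesssim\delta\,\tau_1(z',\delta)^{-l}\tau_k(z',\delta)^{-i}$ with $l+\tfrac{im}{2}\le m$, $i\in\{0,1\}$, $2\le k\le n-1$, I would differentiate \eqref{Eq3} term by term. When $i=0$: applying $D_1^l$ to $a_{jk}(z')\zeta_1^j\bar\zeta_1^k$ produces a term bounded by $|a_{jk}(z')|\,|\zeta_1|^{j+k-l}\lesssim A_{j+k}(z')\tau^{j+k-l}=\big(A_{j+k}(z')\tau^{j+k}\big)\tau^{-l}\lesssim\delta\tau^{-l}$, which is $\delta\,\tau_1^{-l}$ as required; the $|\zeta_\alpha|^2$ terms vanish under $D_1^l$ for $l\ge 1$ and are $O(\delta)=O(\delta\tau_1^0)$ for $l=0$; the mixed terms give $|b^\alpha_{jk}(z')|\,|\zeta_1|^{j+k-l}|\zeta_\alpha|\lesssim B_{j+k}(z')\tau^{j+k-l}\delta^{1/2}=\big(B_{j+k}(z')\tau^{j+k}\delta^{1/2}\big)\tau^{-l}\lesssim\delta\tau^{-l}$; and the error terms differentiate to the right order using that $\tau\gtrsim\delta^{1/2}$, so that negative powers of $\tau$ cost at most the corresponding powers of $\delta^{-1/2}$, which the spare powers of $\delta$ in the error pay for (this is where the constraint $l+\tfrac{im}{2}\le m$ is used, exactly as in the choice \eqref{Eq6}). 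When $i=1$: only the mixed terms and the pieces of the error containing $\zeta_\alpha$ or $\bar\zeta_\alpha$ survive $D_k$ with $k\ge 2$; for the mixed monomials, $D_k^1 D_1^l\big(b^\alpha_{jk}(z')\zeta_1^{j}\bar\zeta_1^{k}\zeta_\alpha\big)$ (with the relevant $\delta_{\alpha k}$) is bounded by $|b^\alpha_{jk}(z')|\,|\zeta_1|^{j+k-l}\lesssim B_{j+k}(z')\tau^{j+k-l}=\big(B_{j+k}(z')\tau^{j+k}\delta^{1/2}\big)\tau^{-l}\delta^{-1/2}\lesssim\delta\,\tau^{-l}\delta^{-1/2}=\delta\,\tau_1^{-l}\tau_k^{-1}$ since $\tau_k=\delta^{1/2}$ for $2\le k\le n-1$; the $|\zeta_\alpha|^2$ term gives $D_k^1$ of order $|\zeta_\alpha|\lesssim\delta^{1/2}=\delta\cdot\delta^{-1/2}=\delta\,\tau_k^{-1}$; and the error terms involving $\zeta^*$ are handled the same way, the constraint $l+\tfrac{m}{2}\le m$ (i.e. $l\le m/2$) ensuring the bookkeeping closes.

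The main obstacle I anticipate is the careful accounting of the error term $O(|\zeta_n||\zeta|+|\zeta^*|^2|\zeta|+|\zeta^*|^2|\zeta_1|^{m/2+1}+|\zeta_1|^{m+1})$ under repeated differentiation: one must check that each differentiated piece still carries enough powers of $\delta^{1/2}$ (from the spare factors $|\zeta_n|\lesssim\delta$, $|\zeta^*|^2\lesssim\delta$, or the extra power $|\zeta_1|^{m/2+1}$, $|\zeta_1|^{m+1}$) to offset the negative powers $\tau_1^{-l}$, $\tau_k^{-i}$ created by $D_1^l D_k^i$, and this is precisely where the admissible range $l+\tfrac{im}{2}\le m$ and the two-sided bound \eqref{Eq7} on $\tau$ are indispensable. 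A secondary subtlety is that the $O(\cdot)$ in \eqref{Eq3} hides smooth functions of $z'$ whose derivatives in $\zeta$ must be uniformly bounded for $z'\in U$; this follows from the $C^\infty$-smoothness of $r$ near $\xi_0$ and the compactness of $\bar U$ after shrinking $U$, together with the uniqueness (hence smooth dependence on $z'$) of the coordinate changes $\varphi_1,\dots,\varphi_5$. Once these points are dispatched the estimates \eqref{Eq11} follow by summing the finitely many monomial bounds. All of this is essentially the computation in \cite[Prop.~2.3]{Cho1}, adapted to the present normalization, so I would state it as such and carry out the monomial-by-monomial verification.
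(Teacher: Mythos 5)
The paper offers no proof of Lemma \ref{L32}: it is imported verbatim from \cite[Prop. 2.3, p. 807]{Cho1}, so there is no internal argument of the authors to compare yours against. That said, your outline follows exactly the route of the cited source, and the polynomial part of your computation is correct: the definition \eqref{Eq5}--\eqref{Eq6} of $\tau(z',\delta)$ is engineered precisely so that $A_l(z')\tau^l\leq\delta$ and $B_{l'}(z')\tau^{l'}\leq\delta^{1/2}$, whence each monomial of the normal form \eqref{Eq3}, together with the admissible derivatives $D_k^iD_1^l$ ($l+\frac{im}{2}\leq m$, $i=0,1$), is bounded by $\delta\,\tau_1(z',\delta)^{-l}\tau_k(z',\delta)^{-i}$ on $R(z',\delta)$; your term-by-term bounds for the $a_{jk}$-block, the $|\zeta_\alpha|^2$-block and the mixed $b^\alpha_{jk}$-block all check out, as does the zeroth-order estimate $|\rho(\zeta)-\rho(0)|\lesssim\delta$ using \eqref{Eq7}.

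The one genuine gap is the point you yourself flag but do not close: the remainder in \eqref{Eq3} is a Landau symbol, and a bound of the form $E(\zeta)=O(|\zeta_n||\zeta|+|\zeta^*|^2|\zeta|+|\zeta^*|^2|\zeta_1|^{\frac{m}{2}+1}+|\zeta_1|^{m+1})$ cannot simply be differentiated. To make the derivative estimates rigorous one must go back one step: write $\rho$ as its full Taylor polynomial about $0$ (to order $m$ in $\zeta_1$, order $2$ in $\zeta_2,\dots,\zeta_{n-1}$, order $1$ in $\zeta_n$) plus an integral-form remainder, note that every Taylor coefficient is an explicit derivative of $r\circ\Phi_{z'}^{-1}$ and hence bounded uniformly for $z'\in U$ by the $C^\infty$-smoothness of $r$ and the uniform control of $\varphi_1,\dots,\varphi_5$, and then verify that after applying $D_k^iD_1^l$ each surviving term is either one of the displayed monomials (handled by your estimates) or a remainder term whose crude $O(1)$ coefficient times the leftover powers of $|\zeta_1|$, $|\zeta^*|$, $|\zeta_n|$ is still $\lesssim\delta\,\tau_1^{-l}\tau_k^{-i}$; this is exactly where the restriction $l+\frac{im}{2}\leq m$ and the two-sided bound \eqref{Eq7} enter, and it is the bookkeeping actually carried out in \cite[Prop. 2.3]{Cho1}. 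Since the lemma is stated in the paper as a citation, deferring to Cho for this step is legitimate, but as a self-contained argument your text is a correct outline rather than a complete proof.
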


\begin{Lemma} \cite[ Cor. 2.8, p. 812 ]{Cho1}.\label{L33} Suppose that $z\in Q(z',\delta)$. Then 
\begin{equation}\label{Eq12} 
\tau(z,\delta)\approx \tau(z',\delta).
\end{equation}
\end{Lemma}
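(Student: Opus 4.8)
The plan is to reduce the comparability $\tau(z,\delta)\approx\tau(z',\delta)$ to a comparison of the normal-form coefficients at the two centres, and to read off that comparison from the derivative estimates already recorded in Lemma \ref{L32}. Recall from (\ref{Eq5})--(\ref{Eq6}) that, for any centre $w$, the quantity $\tau(w,\delta)$ is the largest $\tau$ for which the constraints $A_l(w)\tau^{l}\le\delta$ $(2\le l\le m)$ and $B_{l'}(w)\tau^{l'}\le\delta^{1/2}$ $(2\le l'\le m/2)$ hold simultaneously. Hence it suffices to establish, for $z\in Q(z',\delta)$, the bounds
\begin{equation*}
A_l(z)\lesssim \delta\,\tau(z',\delta)^{-l},\qquad B_{l'}(z)\lesssim \delta^{1/2}\,\tau(z',\delta)^{-l'},
\end{equation*}
together with the analogous bounds in which $z$ and $z'$ are interchanged.

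First I would prove the displayed inequalities. Write $z=\Phi_{z'}^{-1}(\zeta^{0})$ with $\zeta^{0}\in R(z',\delta)$. The coefficients $a_{jk}(z)$ and $b^{\alpha}_{jk}(z)$ of the expansion (\ref{Eq3}) centred at $z$ are produced by redoing the normalizing changes $\varphi_{1},\dots,\varphi_{5}$ at $z$; each of them is, up to bounded combinatorial factors and lower-order corrections obeying the same estimate, one of the derivatives $D_{1}^{l}\rho(\zeta^{0})$ (with $j+k=l$) or $D_{\alpha}^{1}D_{1}^{l'}\rho(\zeta^{0})$ (with $j+k=l'$) evaluated at the interior point $\zeta^{0}$. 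Lemma \ref{L32} bounds precisely these quantities: $|D_{1}^{l}\rho(\zeta^{0})|\lesssim\delta\,\tau_{1}(z',\delta)^{-l}$ and $|D_{\alpha}^{1}D_{1}^{l'}\rho(\zeta^{0})|\lesssim\delta\,\tau_{1}(z',\delta)^{-l'}\tau_{\alpha}(z',\delta)^{-1}=\delta^{1/2}\,\tau_{1}(z',\delta)^{-l'}$, the last equality because $\tau_{\alpha}(z',\delta)=\delta^{1/2}$ for $2\le\alpha\le n-1$ and the constraint $l'\le m/2$ is exactly the admissibility condition $l'+m/2\le m$ in Lemma \ref{L32}. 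Feeding these into the definition (\ref{Eq5}) of $A_l$ and $B_{l'}$ yields the claimed bounds, so that $(\delta/A_l(z))^{1/l}\gtrsim\tau(z',\delta)$ and $(\delta^{1/2}/B_{l'}(z))^{1/l'}\gtrsim\tau(z',\delta)$ for every admissible $l,l'$; taking the minimum gives $\tau(z,\delta)\gtrsim\tau(z',\delta)$.

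For the reverse inequality I would invoke the near-symmetry of the family of polydiscs: the bounds just proved show that the polynomial change of coordinates relating $\Phi_{z'}$ and $\Phi_{z}$ distorts the box $R(\cdot,\delta)$ by only a bounded factor, so that $z\in Q(z',\delta)$ forces $z'\in Q(z,C\delta)$ for a constant $C$ depending only on $m$ and the data. Applying the first half with the roles of $z$ and $z'$ exchanged and $\delta$ replaced by $C\delta$ gives $\tau(z',C\delta)\gtrsim\tau(z,C\delta)$; the scaling estimate (\ref{Eq8}) then shows $\tau(w,C\delta)\approx\tau(w,\delta)$ (with constants depending only on $C$ and $m$) for $w\in\{z,z'\}$, and combining these yields $\tau(z',\delta)\gtrsim\tau(z,\delta)$. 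Together with the first half this gives $\tau(z,\delta)\approx\tau(z',\delta)$.

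I expect the main obstacle to be the bookkeeping in the first step: one must check that recentring the normalizing maps $\varphi_{1},\dots,\varphi_{5}$ from $z'$ to the interior point $z$ genuinely expresses the new coefficients $a_{jk}(z)$ and $b^{\alpha}_{jk}(z)$ through exactly the derivatives controlled by Lemma \ref{L32}, with no loss of weighted homogeneity, and then confirm the attendant near-symmetry $z'\in Q(z,C\delta)$. These are the points where the finite-type hypothesis and the uniform lower bound $|A_m(z')|\ge c>0$ behind (\ref{Eq7}) are used, and they constitute the core of Cho's analysis.
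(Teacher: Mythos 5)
The paper gives no proof of this lemma at all: it is imported verbatim from \cite[Cor. 2.8]{Cho1}, so the only comparison available is with Cho's argument and with how the surrounding lemmas are deployed. Your reconstruction is correct and is essentially that cited argument: you read $\tau(\cdot,\delta)$ off (\ref{Eq5})--(\ref{Eq6}) as the largest admissible polyradius, bound the recentred coefficients $A_l(z)$, $B_{l'}(z)$ by the derivative estimates of Lemma \ref{L32} at the interior point $\Phi_{z'}(z)\in R(z',\delta)$ (the weight count, including $\tau_\alpha(z',\delta)=\delta^{1/2}$ and the admissibility condition $l'\le m/2$, is right), and obtain the reverse inequality from the symmetric inclusion together with the scaling estimate (\ref{Eq8}); this is also exactly the mechanism the paper itself uses to prove Lemma \ref{L34} and the engulfing inclusions (\ref{Eq15})--(\ref{Eq16}). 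Two points deserve emphasis. First, the chain-rule bookkeeping you flag as the main obstacle is indeed where all the work lies, and it requires the coefficient bounds of Lemma \ref{L34} -- not merely the bounds on $A_l(z)$, $B_{l'}(z)$ ``just proved'' -- both to identify the recentred coefficients with derivatives of $\rho$ up to admissible errors and to control the box distortion that yields $z'\in Q(z,C\delta)$. Second, within this paper that inclusion must not be quoted from (\ref{Eq16}), since the paper's proof of (\ref{Eq16}) invokes the present lemma; your route avoids this circularity correctly, because Lemma \ref{L34} rests on Lemma \ref{L32} alone and your first half supplies the one-sided comparison $\tau(z',\delta)\lesssim\tau(z,\delta)$ needed before the roles of $z$ and $z'$ are exchanged.
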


We now apply Lemma \ref{L33} to the question of how the polydiscs $Q(z',\delta)$ and $Q(z'',\delta)$ are related. 
Let $\Phi^{-1}_{z'}$ be the map associated with $z'$ as in Proposition \ref{P31}. Define $\zeta''$ by $z''=\Phi^{-1}_{z'}(\zeta'').$ 
Applying Proposition \ref{P31} at the point $\zeta''$ with $r$ replaced by $\rho=r\circ\Phi^{-1}_{z'},$ 
we obtain a map $\Phi^{-1}_{\zeta''}: \CC^n\to\CC^n$ defined by $\Phi^{-1}_{\zeta''}=\varphi_1\circ\varphi_2\circ\varphi_3\circ\varphi_4\circ\varphi_5$ where

 $z=\varphi_1(u)$ defined by
\begin{equation*}
\begin{split}
z_n&={\zeta''}_n+u_n+ \sum_{j=1}^{n-1} b_j u_j,\\
 z_j&={\zeta''}_j+u_j,\ j=1,\cdots, n-1,
\end{split}
\end{equation*}
 $u=\varphi_2(v)$ defined by 
\begin{equation*}
\begin{split}
u_1&=v_1, \ u_n=v_n,\quad \text{and}\\
 u_j&=\sum_{k=2}^{n-1}\bar P_{jk} v_k,\ j=2,\cdots, n-1,
\end{split}
\end{equation*}
$v=\varphi_3(w)$ defined by 
\begin{equation*}
\begin{split}
v_1&=w_1, \ v_n=w_n,\quad \text{and}\\
 v_j&=\lambda_j w_j,\ j=2,\cdots, n-1,
\end{split}
\end{equation*}
 $w=\varphi_4(t)$ defined by
\begin{equation*}
\begin{split}
w_n&=t_n+\sum_{2\leq k\leq m}d_k t_1^k +\sum_{\alpha=2}^{n-1}\sum_{1\leq k\leq m/2} d_{\alpha, k} t_\alpha t_1^k+\sum_{\alpha=2}^{n-1} c_\alpha t_\alpha^2, \\
 w_j&=t_j,\ j=1,\cdots, n-1,
\end{split}
\end{equation*}
and $t=\varphi_5(\xi)$ defined by
\begin{equation*}
\begin{split}
t_1&=\xi_1,\ t_n=\xi_n,\\
 t_\alpha &=\xi_\alpha+\sum_{1\leq k\leq \frac{m}{2}} e_{\alpha, k} \xi_1^k,\ \alpha=2,\cdots, n-1.
\end{split}
\end{equation*}
\begin{equation*} 
\begin{split}
\rho(\Phi_{\zeta''}^{-1}(\xi))&=\rho(\zeta'')+Re \xi_n+ \sum_{\substack{j+k\leq m\\
 j,k>0}} a_{jk}(\zeta'')\xi_1^j \bar \xi_1^k\\
&+\sum_{\alpha=2}^{n-1}|\xi_\alpha|^2+ \sum_{\alpha=2}^{n-1}Re (( \sum_{\substack{j+k\leq \frac{m}{2}\\
 j,k>0}} b^\alpha_{jk}(\zeta'')\xi_1^j \bar \xi_1^k)\xi_\alpha)\\
&+O(|\xi_n| |\xi|+|\xi''|^2|\xi|+|\xi''|^2|\xi_1|^{\frac{m}{2}+1}+|\xi_1|^{m+1}),
\end{split}
\end{equation*}

Since the composition $\Phi^{-1}_{z'}\circ \Phi^{-1}_{\zeta''}$ gives a map of the same  form as $\Phi^{-1}_{z''}$, where $\Phi^{-1}_{z''}$ is obtained by applying Proposition \ref{P31} to the function $r$ and $z''$, we conclude from the uniqueness statement in Proposition \ref{P31} that
\begin{equation}\label{E13}
\Phi^{-1}_{z''}=\Phi^{-1}_{z'}\circ \Phi^{-1}_{\zeta''}.
\end{equation}

In order to study $Q(z'',\delta)$ we must therefore examine the map $\Phi^{-1}_{\zeta''}$.
\begin{Lemma}\label{L34} Suppose that $z''\in Q(z',\delta)$. Then 
\begin{equation}\label{Eq14} 
\begin{split}
&|b_j|\lesssim \delta \tau_j(z',\delta)^{-1},\ |c_\alpha|\lesssim \delta \tau_\alpha(z',\delta)^{-2},\ |d_k|\lesssim \delta \tau_1(z',\delta)^{-k},  \\
&|d_{\alpha,k}|\lesssim \delta \tau_1(z',\delta)^{-l}\tau_\alpha(z',\delta)^{-1}, \ | e_{\alpha,l}|\lesssim \delta \tau_1(z',\delta)^{-l}\tau_\alpha(z',\delta)^{-1},
\end{split}
\end{equation}
for  $1\leq j\leq n-1,\ 1\leq k\leq m, \ 2\leq \alpha \leq n-1,\  1\leq l\leq \frac{m}{2}$
\end{Lemma}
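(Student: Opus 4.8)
The plan is to read each of the coefficients $b_j$, $c_\alpha$, $d_k$, $d_{\alpha,k}$, $e_{\alpha,k}$ straight off the explicit construction of the map $\Phi^{-1}_{\zeta''}=\varphi_1\circ\varphi_2\circ\varphi_3\circ\varphi_4\circ\varphi_5$ (the normalization of $\rho=r\circ\Phi^{-1}_{z'}$ at the base point $\zeta''$ furnished by Proposition \ref{P31}), to express each of them as a fixed partial derivative of $\rho$ — in a suitable intermediate coordinate system — evaluated at $\zeta''$, and then to invoke the derivative estimates (\ref{Eq11}) of Lemma \ref{L32}; these are available precisely because $z''\in Q(z',\delta)$ means $\zeta''=\Phi_{z'}(z'')\in R(z',\delta)$. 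Before that I would record two harmless preliminary facts, valid after shrinking $U$ and $\delta$: first, $(\ref{Eq3})$ gives $\partial\rho/\partial\zeta_n=\tfrac12+O(|\zeta|)$, so $|\partial\rho/\partial\zeta_n|\geq c>0$ on $R(z',\delta)$ and the denominator occurring in $\varphi_1$ is harmless; secondly, the diagonal scaling $\varphi_3$ has entries $\lambda_j\approx 1$ (the eigenvalues of the normalized Levi form at $\zeta''$, comparable to those at $\xi_0$) and $\varphi_2$ is unitary, so that $\Psi:=\varphi_1\circ\varphi_2\circ\varphi_3$ is an affine map $w\mapsto\zeta$ with $\Psi(0)=\zeta''$, having an $O(1)$ linear part in the $\zeta_1,\dots,\zeta_{n-1}$ directions and whose $\zeta_n$-component contains $w_j$ with coefficient $b_j$.

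With this in hand, the remaining work is chain-rule bookkeeping, which I would carry out as follows. One has $b_j=-\big(\partial\rho/\partial\zeta_n(\zeta'')\big)^{-1}\,\partial\rho/\partial\zeta_j(\zeta'')$, so $|b_1|\lesssim|D^1_1\rho(\zeta'')|\lesssim\delta\,\tau_1(z',\delta)^{-1}$ and $|b_\alpha|\lesssim|D^1_\alpha\rho(\zeta'')|\lesssim\delta\,\tau_\alpha(z',\delta)^{-1}$ by (\ref{Eq11}) with $(i,l)=(0,1)$ and $(1,0)$ respectively. The coefficient $c_\alpha$ is a bounded multiple of $\partial^2(\rho\circ\Psi)/\partial w_\alpha^2(0)$, which the chain rule through $\Psi$ writes as a combination, with $O(1)$ coefficients, of second derivatives $\partial^2\rho/\partial\zeta_\beta\partial\zeta_\gamma(\zeta'')$ (the summands carrying a $\zeta_n$-derivative coming with the small extra factors $b_\beta$); since $\rho\in C^\infty$ near $\xi_0$ with all derivatives bounded uniformly in $z'\in U$, this is $O(1)=O(\delta\,\tau_\alpha(z',\delta)^{-2})$ because $\tau_\alpha(z',\delta)=\delta^{1/2}$. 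The coefficient $d_k$ is a bounded multiple of the pure holomorphic derivative $\partial^k(\rho\circ\Psi)/\partial w_1^k(0)=\sum_{j=0}^{k}\binom{k}{j}b_1^{j}\,\partial^k\rho/\partial\zeta_1^{k-j}\partial\zeta_n^j(\zeta'')$; its leading term ($j=0$) is $D^k_1\rho(\zeta'')$, bounded by $\delta\,\tau_1(z',\delta)^{-k}$ by (\ref{Eq11}) with $i=0$, $l=k\leq m$, while for $j\geq 1$ each summand carries the factor $b_1^{j}$ together with a $\zeta_n$-derivative of $\rho$ that is $O(1)$ (the $\zeta_n$-dependence of $\rho$ in (\ref{Eq3}) being confined to $Re\,\zeta_n$ plus the $C^\infty$ remainder); since $\tau_1(z',\delta)\leq\delta^{1/m}\leq 1$ by (\ref{Eq7}) one has $|b_1|^{j}\lesssim\delta\,\tau_1(z',\delta)^{-k}$ for $1\leq j\leq k$, so the $j=0$ term dominates. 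Finally $d_{\alpha,k}$ and $e_{\alpha,k}$ are, up to bounded multiples and the same type of lower-order corrections, the mixed derivatives $D^1_\alpha D^k_1\rho(\zeta'')$ obtained by removing from (\ref{Eq4}) the $w_1^kw_\alpha$ and the $\bar w_1^kw_\alpha$ terms respectively; (\ref{Eq11}) bounds $|D^1_\alpha D^k_1\rho(\zeta'')|$ by $\delta\,\tau_1(z',\delta)^{-k}\tau_\alpha(z',\delta)^{-1}$ exactly when $l+\tfrac{im}{2}=k+\tfrac m2\leq m$, i.e. $k\leq\tfrac m2$, which is the asserted range (the superscript $-l$ in the displayed bound for $d_{\alpha,k}$ being read as $-k$). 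Collecting these estimates gives (\ref{Eq14}).

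I expect the only genuine difficulty to lie in the second paragraph: faithfully identifying which partial derivative of $\rho$ at $\zeta''$ each coefficient of $\varphi_1,\dots,\varphi_5$ produces once the five elementary changes of variables are composed, and checking that every correction term generated by the small off-diagonal part of $\varphi_1$ and by the scaling $\varphi_3$ is of strictly lower order, so that the single estimate (\ref{Eq11}) suffices. No analytic ingredient beyond Lemma \ref{L32} is required; in fact the restriction $l+\tfrac{im}{2}\leq m$ built into (\ref{Eq11}) is exactly what produces the index ranges $k\leq m$ for $d_k$ and $k\leq\tfrac m2$ for $d_{\alpha,k}$ and $e_{\alpha,k}$ in Lemma \ref{L34}.
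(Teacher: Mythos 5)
Your proposal is correct and follows essentially the same route as the paper: identify each coefficient of $\varphi_1,\dots,\varphi_5$ as an explicit partial derivative of $\rho=r\circ\Phi^{-1}_{z'}$ at $\zeta''\in R(z',\delta)$ and then apply the derivative estimates of Lemma \ref{L32}. The paper's proof is just a terser version of this, omitting the chain-rule bookkeeping and the $O(1)$ treatment of $c_\alpha$ that you spell out.
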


\begin{proof}
From the proof of Proposition \ref{P31}, we see that 
\begin{equation*}
\begin{split}
&b_j=-(\frac{\partial \rho}{\partial \zeta_1})^{-1}\frac{\partial \rho(\zeta'') }{\partial \zeta_j},\\
&c_\alpha=-\frac{\partial^2 \rho (0)}{\partial \zeta^2_\alpha},\\
&d_k=-\frac{2}{k!} \frac{\partial^k \rho(0) }{\partial w_1^k}, \\
&d_{\alpha,l}=-\frac{2}{(l+1)!} \frac{\partial^{l+1} \rho(0) }{\partial w_\alpha \partial w_1^l},\\
&e_{\alpha,l}=-\frac{1}{(l+1)!} \frac{\partial^{l+1} \rho(0) }{\partial \bar t_\alpha\partial t_1^l},
\end{split}
\end{equation*}
for $1\leq j\leq n-1,\ 1\leq k\leq m, \ 2\leq \alpha \leq n-1,\  1\leq l\leq \frac{m}{2}$. By Lemma \ref{L32} and the definition of the biholomorphism $\Phi^{-1}_{\zeta''}$ we conclude that (\ref{Eq14}) holds. 
\end{proof}
\begin{proposition}\label{P2} There exists a constant $C$ such that if $z''\in Q(z',\delta)$, then
\begin{equation}\label{Eq15}
Q(z'',\delta)\subset Q(z',C\delta)
\end{equation}
and
\begin{equation}\label{Eq16}
Q(z',\delta)\subset Q(z'',C\delta)
\end{equation}
\end{proposition}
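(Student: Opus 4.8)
The plan is to show the two inclusions are essentially symmetric, so I will concentrate on \eqref{Eq15} and then observe that swapping the roles of $z'$ and $z''$ (which is legitimate because $z''\in Q(z',\delta)$ implies $z'\in Q(z'',C'\delta)$ for a controlled constant, by the near-symmetry already built into Lemma \ref{L33}) yields \eqref{Eq16}. The key structural fact is the composition identity \eqref{E13}: $\Phi^{-1}_{z''}=\Phi^{-1}_{z'}\circ\Phi^{-1}_{\zeta''}$, so that a point of $Q(z'',\delta)$, written as $\Phi^{-1}_{z''}(\xi)$ with $\xi\in R(z'',\delta)$, equals $\Phi^{-1}_{z'}\big(\Phi^{-1}_{\zeta''}(\xi)\big)$. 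Thus it suffices to prove that $\Phi^{-1}_{\zeta''}$ maps $R(z'',\delta)$ into $R(z',C\delta)$, i.e. into the polydisc of the $z'$-coordinates with radii multiplied by a fixed constant, since then $\Phi^{-1}_{z'}\big(\Phi^{-1}_{\zeta''}(\xi)\big)\in Q(z',C\delta)$ by definition \eqref{Eq10}.

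So the heart of the argument is a componentwise size estimate for $\Phi^{-1}_{\zeta''}=\varphi_1\circ\varphi_2\circ\varphi_3\circ\varphi_4\circ\varphi_5$ evaluated on $R(z'',\delta)$. First I would use Lemma \ref{L33}: since $z''\in Q(z',\delta)$, we have $\tau(z'',\delta)\approx\tau(z',\delta)$, hence $\tau_k(z'',\delta)\approx\tau_k(z',\delta)$ for all $k$; this means $R(z'',\delta)$ and the polydisc with $z'$-radii are comparable, so it is enough to track how each of the five elementary maps distorts a polydisc whose radii are the $\tau_k(z',\delta)$. The maps $\varphi_2,\varphi_3$ are linear in $(v_2,\dots,v_{n-1})$ with bounded coefficients (unitary $P$, and $\lambda_j\approx 1$ bounded above and below on $U$), so they change each $\tau_\alpha$ by a bounded factor. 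The maps $\varphi_4$ and $\varphi_5$ are upper-triangular perturbations: $\varphi_4$ alters only the $w_n$-coordinate by adding terms $d_k t_1^k$, $d_{\alpha,k}t_\alpha t_1^k$, $c_\alpha t_\alpha^2$, and $\varphi_5$ alters $w_\alpha$ by adding $e_{\alpha,k}\xi_1^k$. Here I invoke Lemma \ref{L34}, which bounds exactly these coefficients: $|d_k|\lesssim\delta\tau_1^{-k}$, so $|d_k t_1^k|\lesssim\delta\tau_1^{-k}\tau_1^k=\delta=\tau_n$ on $R$; $|c_\alpha t_\alpha^2|\lesssim\delta\tau_\alpha^{-2}\tau_\alpha^2=\delta=\tau_n$; $|d_{\alpha,k}t_\alpha t_1^k|\lesssim\delta\tau_1^{-k}\tau_\alpha^{-1}\cdot\tau_\alpha\tau_1^k=\delta=\tau_n$; and $|e_{\alpha,k}\xi_1^k|\lesssim\delta\tau_1^{-k}\tau_\alpha^{-1}\cdot\tau_1^k=\delta\tau_\alpha^{-1}=\delta^{1/2}\cdot(\delta^{1/2}\tau_\alpha^{-1})\lesssim\delta^{1/2}=\tau_\alpha$, using $\tau_\alpha=\delta^{1/2}$. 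Summing the boundedly-many such terms shows each new coordinate stays within a fixed multiple of the corresponding $\tau_k(z',\delta)$. Finally $\varphi_1$ is an affine shift by $\zeta''$ together with a linear term $\sum b_j u_j$ in the $n$-th slot; since $z''\in Q(z',\delta)$ we have $\zeta''\in R(z',\delta)$ so the shift is absorbed, and $|b_j u_j|\lesssim\delta\tau_j^{-1}\cdot\tau_j=\delta=\tau_n$ by Lemma \ref{L34}. Composing these bounded distortions, $\Phi^{-1}_{\zeta''}(R(z'',\delta))\subset R(z',C\delta)$ for a single constant $C$ depending only on the geometry near $\xi_0$ (and on $n,m$), which is what we needed.

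The main obstacle I anticipate is purely bookkeeping rather than conceptual: one must be careful that the constant $C$ is genuinely uniform, i.e. independent of $z'$, $z''$, and $\delta$. This requires that all the implicit constants in Lemmas \ref{L32}, \ref{L33}, \ref{L34} are uniform over $z'\in U$ (true because they come from $C^\infty$ bounds on $r$ on the compact $\overline U$ and from $A_m(z')\gtrsim 1$), and that the comparability $\tau_k(z'',\delta)\approx\tau_k(z',\delta)$ has constants independent of $\delta$ — which is the content of \eqref{Eq8} combined with Lemma \ref{L33}. A secondary subtlety is the step "$z''\in Q(z',\delta)\Rightarrow z'\in Q(z'',C'\delta)$" used to get \eqref{Eq16} from \eqref{Eq15}: this follows by applying the just-proved inclusion with the roles reversed once one knows $z'\in Q(z'',\delta')$ for some $\delta'\approx\delta$, which in turn is immediate from $\tau(z'',\delta)\approx\tau(z',\delta)$ and the explicit form of $\Phi^{-1}_{\zeta''}$ being invertible with an inverse of the same type (again by the uniqueness in Proposition \ref{P31}). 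Once uniformity is nailed down, the rest is the elementary polydisc-distortion estimate sketched above.
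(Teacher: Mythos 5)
Your proposal is correct and follows essentially the same route as the paper: reduce \eqref{Eq15} via the composition identity $\Phi^{-1}_{z''}=\Phi^{-1}_{z'}\circ\Phi^{-1}_{\zeta''}$ to showing $\Phi^{-1}_{\zeta''}(R(z'',\delta))\subset R(z',C\delta)$, and then bound each of the five elementary maps coordinatewise using the coefficient estimates of Lemma \ref{L34} together with $\tau(z'',\delta)\approx\tau(z',\delta)$ from Lemma \ref{L33}. The only cosmetic difference is your derivation of \eqref{Eq16} by symmetry through ``$z'\in Q(z'',C'\delta)$'' plus monotonicity in $\delta$, whereas the paper estimates the inverse map $\Phi_{\zeta''}$ directly on $R(z',\delta)$; both rest on the same bounds for $\Phi_{\zeta''}$, so this is not a genuinely different argument.
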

\begin{proof}
Define $S(z'',\delta)=\{\Phi^{-1}_{\zeta''}(\xi): \xi\in R(z'',\delta)\}$. It easy to see that $Q(z'',\delta)=\Phi^{-1}_{z'}\circ S(z'',\delta)$. Thus, in order to prove (\ref{Eq15}) it suffices to show that 
\begin{equation}\label{Eq17}
S(z'',\delta)\subset R(z',C\delta)
\end{equation}

Indeed, for each $\xi\in R(z'',\delta)$, set $t=\varphi_5(\xi)$. By Lemma \ref{L33} and Lemma \ref{L34}, we have
\begin{equation*}
\begin{split}
 |t_1|&=|\xi_1|\leq\tau_1(z'',\delta)\lesssim\tau_1(z',\delta),\\
|t_n|&=|\xi_n|\leq \tau_n(z'',\delta)= \tau_n(z',\delta)=\delta,\\
|t_\alpha|&\leq |\xi_\alpha|+\sum_{k=2}^{n-1}|e_{\alpha, k}| |\xi_1|^k\lesssim\tau_\alpha(z'',\delta)+ \delta \tau_1(z',\delta)^{-k}\tau_\alpha(z',\delta)^{-1}\tau_1(z'',\delta)^k\\
&\lesssim \tau_\alpha(z',\delta),\quad 2\leq \alpha\leq n-1.
\end{split}
\end{equation*}
We also set $w=\varphi_4(t)$, by Lemma \ref{L34}, we have
\begin{equation*}
\begin{split}
|w_n|&\leq |t_n|+\sum_{k=2}^{ m} |d_k| |t_1|^k+\sum_{\alpha=2}^{n-1}\sum_{k=1}^{m/2}|d_{\alpha,k}| |t_\alpha| |t_1|^k+\sum_{\alpha=2}^{n-1}|c_\alpha| |t_\alpha|^2\\
       &\lesssim \tau_n(z',\delta)+\sum_{k=2}^{ m} \delta  \tau_1(z',\delta)^{-k}  \tau_1(z',\delta)^k+\sum_{\alpha=2}^{ n-1} \delta  \tau_\alpha(z',\delta)^{-2}  \tau_\alpha(z',\delta)^2\\
&+\sum_{\alpha=2}^{n-1}\sum_{k=1}^{m/2} \delta  \tau_1(z',\delta)^{-k}\tau_\alpha(z',\delta)^{-1}\tau_\alpha(z',\delta)\tau_1(z',\delta)^k\lesssim \delta=\tau_n(z',\delta) ,\\
|w_j|&=|t_j|\lesssim \tau_j(z',\delta), \quad 1\leq j\leq n-1.
\end{split}
\end{equation*}

Set $v=\varphi_3(w)$, $u=\varphi_2(v)$ and $\zeta=\varphi_1(u)$. It is easy to see that $|v_j|\lesssim \tau_j(z',\delta),\ |u_j|\lesssim \tau_j(z',\delta),\ |\zeta_j|\lesssim \tau_j(z',\delta),\ 1\leq j\leq n$ and hence, (\ref{Eq17}) holds if $C$ is sufficiently large.

To prove (\ref{Eq16}), define  $P(z',\delta)=\{\Phi_{\zeta''}(\zeta):\zeta \in R(z',\delta)\}$, it easy to see that $Q(z',\delta)=\Phi^{-1}_{z''}\circ P(z'',\delta)$. Thus, it suffices to show that
\begin{equation}\label{Eq18}
P(z',\delta)\subset R(z'',C\delta).
\end{equation}

Indeed, we see that $\Phi_{\zeta''}=\varphi_5^{-1}\circ\varphi_4^{-1}\circ\varphi_3^{-1}\circ\varphi_2^{-1}\circ\varphi_1^{-1}$ and  $$\tau(z',\delta)\lesssim \tau(z'',\delta).$$

Applying (\ref{Eq14}) in the same way as above, we conclude that if $\zeta\in R(z',\delta)$, then $\xi=\Phi_{\zeta''}(\zeta)\in R(z'',C \delta)$, where $C$ is sufficiently large.

Hence, (\ref{Eq18}) holds. The proof is completed.
\end{proof}
\subsection{Dilation of coordinates}

Let $\Omega$ be a domain in $\CC^n$. Suppose that $\partial \Omega $ is pseudoconvex,  of finite type and is smooth of class $C^\infty$ near a boundary point $\xi_0\in \partial \Omega$ and suppose that the Levi form has rank at least $n-2$ at $\xi_0$. 

We may assume that $\xi_0=0$ and the rank of Levi form at $\xi_0$ is exactly $n-2$. Let $\rho$ be a smooth defining function for  $\Omega $. After a linear change of coordinates, we can find coordinate functions $z_1,\cdots, z_n$ defined on a neighborhood $U_0$ of $\xi_0$ such that 
\begin{equation*}
\begin{split}
\rho(z)&=Re z_n+ \sum_{\substack{j+k\leq m\\
 j,k>0}} a_{j,k}z_1^j \bar z_1^k\\
&+\sum_{\alpha=2}^{n-1}|z_\alpha|^2+ \sum_{\alpha=2}^{n-1} \sum_{\substack{j+k\leq \frac{m}{2}\\
 j,k>0}}Re (( b^\alpha_{j,k}z_1^j \bar z_1^k)z_\alpha)\\
&+O(|z_n| |z|+|z^*|^2|z|+|z^*|^2|z_1|^{\frac{m}{2}+1}+|z_1|^{m+1}),
\end{split}
\end{equation*}
where $z^*=(0,z_2,\cdots,z_{n-1},0)$.

By Proposition \ref{P31}, for each point $\eta$ in a small neighborhood of the origin, there exists a unique automorphism $\Phi_\eta$ of $\CC^n$ such that
\begin{equation}\label{Eq19} 
\begin{split}
\rho(\Phi_{\eta}^{-1}(w))-\rho(\eta)&= Re w_n+ \sum_{\substack{j+k\leq m\\
 j,k>0}} a_{j,k}(\eta)w_1^j \bar w_1^k\\
&+\sum_{\alpha=2}^{n-1}|w_\alpha|^2+ \sum_{\alpha=2}^{n-1} \sum_{\substack{j+k\leq \frac{m}{2}\\
 j,k>0}} Re [(b^\alpha_{j,k}(\eta)w_1^j \bar w_1^k)w_\alpha]\\
&+O(|w_n| |w|+|w^*|^2|w|+|w^*|^2|w_1|^{\frac{m}{2}+1}+|w_1|^{m+1}),
\end{split}
\end{equation}
where $w^*=(0,w_2,\cdots,w_{n-1},0)$.

We define an anisotropic dilation $\Delta_\eta^\epsilon$ by $$\Delta_\eta^\epsilon (w_1,\cdots,w_n)=(\frac{w_1}{\tau_1(\eta,\epsilon)},\cdots,\frac{w_n}{\tau_n(\eta,\epsilon)}),$$
where $\tau_1(\eta,\epsilon)=\tau(\eta,\epsilon),\  \tau_k(\eta,\epsilon)=\sqrt{\epsilon}\ (2\leq k\leq n-1), \ \tau_n(\eta,\epsilon)=\epsilon$. 

For each $\eta\in \partial \Omega$, if we set $\rho_\eta^\epsilon(w)=\epsilon^{-1}\rho\circ \Phi_\eta^{-1}\circ(\Delta_\eta^\epsilon)^{-1}(w)$, then 
\begin{equation}\label{Eq20} 
\begin{split}
\rho_\eta^\epsilon(w)&= Re w_n+ \sum_{\substack{j+k\leq m\\
 j,k>0}} a_{j,k}(\eta) \epsilon^{-1} \tau(\eta,\epsilon)^{j+k}w_1^j \bar w_1^k+\sum_{\alpha=2}^{n-1}|w_\alpha|^2\\
&+ \sum_{\alpha=2}^{n-1} \sum_{\substack{j+k\leq \frac{m}{2}\\
 j,k>0}}Re ( b^\alpha_{j,k}(\eta)\epsilon^{-1/2} \tau(\eta,\epsilon)^{j+k}w_1^j \bar w_1^kw_\alpha)+O(\tau(\eta,\epsilon)),
\end{split}
\end{equation}

For each $\eta \in U_0$, we define pseudo-balls $Q(\eta,\epsilon)$ by
\begin{equation}\label{Eq21}
\begin{split}
Q(\eta,\epsilon)&:=\Phi^{-1}_\eta(\Delta_\eta^\epsilon)^{-1}(D\times\cdots\times D)\\
&=\Phi^{-1}_\eta\{ |w_k|<\tau_k(\eta,\epsilon), \ 1\leq k\leq n\},
\end{split}
\end{equation}
where $D_r:=\{z\in \CC: |z|<r\}$. There exist constants $0\leq \alpha\leq 1$ and $C_1,C_2,C_3\geq 1$ such that for $\eta,\eta'\in U_0$ and $\epsilon \in (0,\alpha]$ the following estimates are satisfied with $\eta\in Q(\eta',\epsilon)$
\begin{equation}\label{Eq22}
\rho(\eta)\leq \rho(\eta')+C_1\epsilon,
\end{equation}
\begin{equation}\label{Eq23}
\frac{1}{C_2}\tau(\eta,\epsilon)\leq\tau(\eta',\epsilon)\leq C_2\tau(\eta,\epsilon),
\end{equation}
\begin{equation}\label{Eq24}
Q(\eta,\epsilon)\subset Q(\eta',C_3\epsilon)\ \text{and}\ Q(\eta',\epsilon)\subset Q(\eta,C_3\epsilon).
\end{equation}
Set $\epsilon (\eta):=|\rho(\eta)|, \ \Delta_\eta:=\Delta_\eta^{\epsilon (\eta)}$ and $C_4=C_1+1$. By (\ref{Eq22}), we have 
\begin{equation}\label{Eq25}
\eta \in Q(\eta',\epsilon(\eta'))\Rightarrow \epsilon(\eta)\leq C_4\epsilon(\eta').
\end{equation}

Fix neighborhoods $W_0,V_0$ of the origin with $W_0\subset V_0\subset U_0.$ Then for sufficiently small constants $\alpha_1,\alpha_0 \ (0<\alpha_1\leq \alpha_0<1),$ we have 
\begin{equation}\label{Eq26}
\eta \in V_0\ \text{ and}\ 0<\epsilon\leq \alpha_0\Rightarrow Q(\eta,\epsilon)\subset U_0 \ \text{and}\ \epsilon(\eta)\leq \alpha_0
\end{equation}
\begin{equation}\label{Eq27}
\eta \in W_0\ \text{ and}\ 0<\epsilon\leq \alpha_1\Rightarrow Q(\eta,\epsilon)\subset V_0 .
\end{equation}

Define a pseudo-metric by $$M(\eta,\overrightarrow{X}):=\sum_{k=1}^n \frac{|({\Phi'}_\eta(\eta)\overrightarrow{X})_k|}{\tau_k(\eta,\epsilon(\eta))} =\|\Delta_\eta\circ {\Phi'}_\eta(\eta)\overrightarrow{X}\|_1$$ on $U_0$. By (\ref{Eq7}), one has $$\frac{\|\overrightarrow{X}\|_1}{\epsilon(\eta)^{1/m}}\lesssim M(\eta,\overrightarrow{X})\lesssim        \frac{\|\overrightarrow{X}\|_1}{\epsilon(\eta)}. $$
\begin{Lemma}\label{L4}
There exist constants $K\geq 1 (K=C_3\cdot C_4)$ and $0<A< 1$ such that for each integer $N\geq 1$ and each holomorphic $f: D_N\to U_0$ satisfies $M(f(u),f'(u))\leq A$ on $D_N$, we have 
$$f(0)\in W_0\ \text{and}\ K^{N-1}\epsilon(f(0))\leq \alpha_1\Rightarrow \overline{ f(D_N)}\subset Q[f(0),K^N\epsilon(f(0))].$$
\end{Lemma}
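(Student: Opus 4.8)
### Proof strategy for Lemma 3.12

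The plan is to argue by induction on $N$, using the distortion estimates for the pseudo-metric $M$ together with the comparison results for the pseudo-balls $Q(\eta,\epsilon)$ established in Proposition \ref{P2} and in the estimates \eqref{Eq22}--\eqref{Eq27}. First I would fix the constant $A$ small enough that the following one-step claim holds: if $g:D\to U_0$ is holomorphic with $M(g(u),g'(u))\le A$ on the unit disc $D$ and $g(0)=\eta$ with $\epsilon(\eta)\le\alpha_1$, then $\overline{g(D)}\subset Q(\eta,C_3C_4\,\epsilon(\eta))=Q(\eta,K\epsilon(\eta))$. The point here is that the hypothesis $M(g,g')\le A$ says exactly that, after composing with the affine map $\Delta_\eta\circ\Phi'_\eta(\eta)$ that normalizes the pseudo-ball to a polydisc, the derivative of the resulting map is bounded by $A$ at $u=0$; a Cauchy/Schwarz-lemma estimate on a slightly smaller disc, combined with the fact that $\Phi_\eta$ and its inverse have controlled derivatives on $U_0$ (and that $\tau_k(\eta,\epsilon)$ scales by the submultiplicative/supermultiplicative bounds \eqref{Eq8}), then forces $g(D)$ into the dilated pseudo-ball. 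This is the technical heart of the argument and where the precise choice of $A$ versus $K$ gets pinned down.

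Next I would run the induction. The base case $N=1$ is precisely the one-step claim applied to $f$ on $D=D_1$, giving $\overline{f(D_1)}\subset Q(f(0),K\epsilon(f(0)))$. For the inductive step, assume the statement for $N-1$ and let $f:D_N\to U_0$ satisfy $M(f,f')\le A$ with $f(0)\in W_0$ and $K^{N-1}\epsilon(f(0))\le\alpha_1$. Pick any $a\in D$ (the unit disc inside $D_N$) and consider the rescaled map $f_a(u):=f(a+(N-1)u)$ on $D_1$; wait — more carefully, one covers $D_N$ by finitely many translated unit discs and propagates the containment. Concretely, for $|a|\le N-1$ write $\tilde f(u)=f(a+u)$ on $D_1$: by the one-step claim $\overline{\tilde f(D_1)}\subset Q(f(a),K\epsilon(f(a)))$. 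Applying the inductive hypothesis to $f|_{D_{N-1}}$ (after noting $M$ is unchanged under this restriction and $f(0)\in W_0$, $K^{N-2}\epsilon(f(0))\le K^{N-1}\epsilon(f(0))\le\alpha_1$) gives $\overline{f(D_{N-1})}\subset Q(f(0),K^{N-1}\epsilon(f(0)))$, and in particular every $f(a)$ with $|a|\le N-1$ lies in $Q(f(0),K^{N-1}\epsilon(f(0)))$.

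It then remains to glue: for $z\in D_N$, write $z=a+u$ with $|a|=N-1$ (or $|a|\le N-1$) and $|u|\le 1$, so $f(z)\in Q(f(a),K\epsilon(f(a)))$. Since $f(a)\in Q(f(0),K^{N-1}\epsilon(f(0)))$, estimate \eqref{Eq25} gives $\epsilon(f(a))\le C_4\epsilon(f(0))\cdot K^{N-2}\cdots$ — more precisely, iterating \eqref{Eq25} along the chain yields $\epsilon(f(a))\le C_4 K^{N-2}\epsilon(f(0))$, hence $K\epsilon(f(a))\le C_3C_4\cdot K^{N-1}\epsilon(f(0))$. Now apply the inclusion \eqref{Eq24} (equivalently \eqref{Eq15}): since $f(a)\in Q(f(0),K^{N-1}\epsilon(f(0)))$, we get $Q(f(a),K\epsilon(f(a)))\subset Q(f(0),C_3\cdot K\epsilon(f(a)))\subset Q(f(0),C_3C_4 K^{N-1}\epsilon(f(0)))=Q(f(0),K^N\epsilon(f(0)))$, using $K=C_3C_4$. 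Taking the union over all such $z$ gives $\overline{f(D_N)}\subset Q(f(0),K^N\epsilon(f(0)))$, completing the induction.

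The main obstacle I anticipate is the one-step claim: one must carefully track how the bound $M(g,g')\le A$ on the model side (where the pseudo-ball is a product of discs $D\times\cdots\times D$) transfers back through $\Phi_\eta$, using that $\Phi_\eta'(\eta)$ and its inverse are uniformly bounded on $U_0$ and that $\tau(\eta,\epsilon)$ obeys the scaling \eqref{Eq7}--\eqref{Eq8} and the comparability \eqref{Eq12}/\eqref{Eq23} as $\eta$ varies within a pseudo-ball. Making sure the constant $A$ can be chosen independently of $N$ and of the point $\eta$ — so that the induction does not accumulate error — is the delicate point; this is exactly why $A$ is taken strictly less than $1$ and $K=C_3C_4$ is the product of the two geometric constants from \eqref{Eq24} and \eqref{Eq25}.
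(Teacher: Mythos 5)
Your strategy is in essence the paper's: a one--step Schwarz-type containment of $g(D_1)$ in a pseudo-ball around $g(0)$, derived from the bound $M(g,g')\leq A$, and then a propagation across $D_N$ by unit discs using the engulfing properties (\ref{Eq24}) and (\ref{Eq25}), with $K=C_3C_4$ meant to absorb one factor from each. But the way you organize the propagation creates a genuine gap in the constants. The paper's one-step estimate reads $\overline{f(D(u,1))}\subset Q(f(u),\epsilon(f(u)))$ --- radius $\epsilon$, with \emph{no} factor $K$ --- and it comes from the inequality $M(\eta,\overrightarrow{X})\geq A\|\Delta_{\eta_0}\circ{\Phi'}_{\eta_0}(\eta)\overrightarrow{X}\|_1$ valid for all $\eta\in Q(\eta_0,\epsilon(\eta_0))$, proved by decomposing $\Phi_\eta\circ\Phi_{\eta_0}^{-1}$ into $\varphi_1,\dots,\varphi_5$ and invoking Lemma \ref{L34}; this uniform comparison of $M(\eta,\cdot)$ with the norm normalized at the \emph{base} point $\eta_0$ is the real content of the lemma, and you only gesture at it. The paper then inducts along a chain of unit discs $D(u_j,1)$, $u_j=je^{i\theta_0}$, on each ray: the chain retains the sharp information $\epsilon_{j+1}\leq C_4K^{j}\epsilon_0$, so each link costs exactly one factor $C_3C_4=K$ and simultaneously guarantees $\epsilon_{j+1}\leq K^{N-1}\epsilon_0\leq\alpha_1$, which is what licenses applying the one-step estimate at the next center.

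Your induction on $N$ retains only the global statement $\overline{f(D_{N-1})}\subset Q(f(0),K^{N-1}\epsilon_0)$. From this, (\ref{Eq22}) yields only $\epsilon(f(a))\leq C_4K^{N-1}\epsilon_0$ for $|a|\leq N-1$, not the $C_4K^{N-2}\epsilon_0$ you assert by ``iterating along the chain'' --- your induction keeps no chain to iterate along. Combined with your one-step radius $K\epsilon(f(a))$, the gluing via (\ref{Eq24}) lands you in $Q(f(0),C_3\cdot K\cdot C_4K^{N-1}\epsilon_0)=Q(f(0),K^{N+1}\epsilon_0)$; the resulting recursion $S_N\leq K^2S_{N-1}$ gives growth like $K^{2N}$, not the stated $K^N$. (Also, your application of (\ref{Eq24}) to get $Q(f(a),K\epsilon(f(a)))\subset Q(f(0),C_3K\epsilon(f(a)))$ needs $f(a)\in Q(f(0),K\epsilon(f(a)))$, which you have not arranged, and the hypothesis $\epsilon(f(a))\leq\alpha_1$ for the one-step claim is verified only up to the factor $C_4$.) None of this matters for how Lemma \ref{L4} is used later --- any fixed geometric rate would serve --- but as written your induction does not close with the advertised constant $K=C_3C_4$. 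The repair is either to sharpen your one-step claim to radius $\epsilon(\eta)$, which is what the metric comparison actually delivers, or to adopt the paper's ray-by-ray chain so that the growth of $\epsilon$ is tracked link by link.
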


\begin{proof}
Let $\eta_0\in V_0$ and $\eta\in Q(\eta_0,\epsilon_0)$, where $\epsilon_0=\epsilon(\eta_0)$. From (\ref{Eq25}), (\ref{Eq23}) and (\ref{Eq8}) one has $\epsilon (\eta)\leq C_4\epsilon_0$ and 
$$\tau(\eta,\epsilon(\eta))\leq\tau(\eta,C_4\epsilon_0)\leq C_2\sqrt{C_4} \tau(\eta_0,\epsilon_0).$$  

Thus $M(\eta,\overrightarrow{X})\gtrsim \sum\limits_{k=1}^n \dfrac{|({\Phi'}_\eta(\eta)\overrightarrow{X})_k|}{\tau_k(\eta_0,\epsilon_0)}.$

In order to replace ${\Phi'}_\eta(\eta)$ by ${\Phi'}_{\eta_0}(\eta)$ in this inequality, we consider the automorphism $\Psi:={\Phi}_\eta\circ {\Phi}_{\eta_0}^{-1}$ which equals ${\Phi}^{-1}_a=\varphi_1\circ\varphi_2\circ\varphi_3\circ\varphi_4\circ\varphi_5$ where $a:={\Phi}_\eta(\eta_0) $ and $\varphi_j (1\leq j\leq 5)$ are given in the previous section.

If we set $\Lambda:={\Phi'}_\eta(\eta)\circ({\Phi'}_{\eta_0}(\eta))^{-1}=\Psi'({\Phi}_{\eta_0}(\eta))$, then $\Lambda={\varphi'}_1\circ{\varphi'}_2\circ{\varphi'}_3\circ{\varphi'}_4\circ{\varphi'}_5$. By a simple computation, we have 
$${\varphi'}_1(w_1,\cdots,w_n)=(w_1, w_2,\cdots, w_n +\sum_{k=1}^{n-1} b_k w_k)$$ 
where $|b_k|\leq C.\frac{\epsilon_0} {\tau_k(\eta_0,\epsilon_0)} \ (1\leq k\leq n-1)$ for some constant $C\geq 1$.

Set $\overrightarrow{Y}:={\Phi'}_{\eta_0}(\eta)\overrightarrow{X},\ \overrightarrow{Y^4}:={\varphi'}_5\overrightarrow{Y},\ \overrightarrow{Y^3}:={\varphi'}_4\overrightarrow{Y^4},\ \overrightarrow{Y^2}:={\varphi'}_3\overrightarrow{Y^3}$ and $\overrightarrow{Y^1}:={\varphi'}_2\overrightarrow{Y^2} $, since ${\Phi'}_{\eta}(\eta)\overrightarrow{X}=\Lambda [\overrightarrow{Y}]={\varphi'}_1\overrightarrow{Y^1}$, we have
\begin{equation*}
\begin{split}
M(\eta,\overrightarrow{X})&\gtrsim \frac{|({\Phi'}_\eta(\eta)\overrightarrow{X})_1|}{\tau_{1}(\eta_0,\epsilon_0)}+\cdots+\frac{|({\Phi'}_\eta(\eta)\overrightarrow{X})_2|}{\tau_{n-1}(\eta_0,\epsilon_0)}+\frac{|({\Phi'}_\eta(\eta)\overrightarrow{X})_{n}|}{2 C \epsilon_0}\\
&\gtrsim \sum_{k=1}^{n-1} (1-\frac{|b_k|\tau_k(\eta_0,\epsilon_0)}{2 C\epsilon_0})\frac{|Y^1_k|}{\tau_k(\eta_0,\epsilon_0)}+\frac{|Y^1_n|}{2 C\epsilon_0}\\
&\gtrsim \sum_{k=1}^{n}  \frac{|Y^1_k|}{\tau_k(\eta_0,\epsilon_0)}.
\end{split}
\end{equation*}

Because of the definition of the maps $\varphi_2$ and $\varphi_3$, it is easy to show that
$$\sum_{k=1}^{n}  \frac{|Y^1_k|}{\tau_k(\eta_0,\epsilon_0)}\gtrsim \sum_{k=1}^{n}  \frac{|Y^2_k|}{\tau_k(\eta_0,\epsilon_0)}\gtrsim \sum_{k=1}^{n}  \frac{|Y^3_k|}{\tau_k(\eta_0,\epsilon_0)}.$$
 
Next we also have
 $${\varphi'}_4(w_1,\cdots,w_n)=(w_1, w_2,\cdots, w_n+\sum_{k=1}^{n-1} \gamma_k w_k )$$ 
where 
\begin{equation*}
\begin{split}
|\gamma_k|&\lesssim\sum_{j=1}^{m/2}|d_{k,j}| \tau_1(\eta_0,\epsilon_0)^j +2.|c_k|\tau_k(\eta_0,\epsilon_0)\leq C.\frac{\epsilon_0} {\tau_k(\eta_0,\epsilon_0)},\\
|\gamma_1|&\lesssim \sum_{\alpha=2}^{n-1}\sum_{j=1}^{m/2}|d_{\alpha,j}| \tau_\alpha(\eta_0,\epsilon_0) .j. \tau_1(\eta_0,\epsilon_0)^{j-1}+\sum_{j=2}^{m}|d_{j}| .j. \tau_1(\eta_0,\epsilon_0)^{j-1} \\
&\leq C.\frac{\epsilon_0} {\tau_1(\eta_0,\epsilon_0)} .
\end{split}
\end{equation*}
for $ k=2,\cdots, n-1$ and  some constant $C\geq 1$.
Using the same argument as above we have  
$$\sum_{k=1}^{n}  \frac{|Y^3_k|}{\tau_k(\eta_0,\epsilon_0)}\gtrsim \sum_{k=1}^{n}  \frac{|Y^4_k|}{\tau_k(\eta_0,\epsilon_0)}.$$

The derivative of $\varphi_5$ is defined by 
$${\varphi'}_5(w_1,\cdots,w_n)=(w_1, w_2+\beta_2 w_1,\cdots,w_{n-1}+\beta_{n-1} w_1 ,w_n )$$ 
where $|\beta_k |\lesssim \sum\limits_{l=1}^{\frac{m}{2}}|e_{k,l}| .l. \tau_1(\eta_0,\epsilon_0)^{l-1} \leq C.\frac{\epsilon_0} {\tau_k (\eta_0,\epsilon_0)\tau_1 (\eta_0,\epsilon_0)} \ (2\leq k \leq n-1)$  for some constant $C\geq 1$.

Since $\overrightarrow{Y^4}={\varphi'}_5\overrightarrow{Y}$, we have
\begin{equation*}
\begin{split}
\sum_{k=1}^{n}  \frac{|Y^4_k|}{\tau_k(\eta_0,\epsilon_0)}&\gtrsim \frac{|Y_1^4|}{\tau_1(\eta_0,\epsilon_0)}+\sum_{k=2}^{n-1}\frac{|Y^4_k|}{2nC\tau_k(\eta_0,\epsilon_0)}+\frac{|Y^4_n|}{\tau_n(\eta_0,\epsilon_0)}\\
&\gtrsim  (1-\sum_{k=2}^{n-1}\frac{|\beta_k|\tau_1(\eta_0,\epsilon_0)}{2n C\tau_k(\eta_0,\epsilon_0)})\frac{|Y_1|}{\tau_1(\eta_0,\epsilon_0)}+\\
&+\sum_{k=2}^{n-1}\frac{|Y_k|}{2nC\tau_k(\eta_0,\epsilon_0)}+\frac{|Y_n|}{\tau_n(\eta_0,\epsilon_0)}\\
 &\gtrsim \sum_{k=1}^{n}  \frac{|Y_k|}{\tau_k(\eta_0,\epsilon_0)}=\sum_{k=1}^{n}  \frac{|({\Phi'}_{\eta_0}(\eta)\overrightarrow{X})_k|}{\tau_k(\eta_0,\epsilon_0)}.
\end{split}
\end{equation*}

Therefore, there exists a constant $1\geq A>0$ such that $M(\eta,\overrightarrow{X})\geq A\|\Delta_{\eta_0}\circ{\Phi'}_{\eta_0}(\eta)\overrightarrow{X}\|_1$ for every $\eta_0\in V_0$ and for  every $ \eta\in Q(\eta_0,\epsilon(\eta_0))$. By this observation, we can finish the proof.

a) If $N=1$, the inclusion $f(D_1)\subset Q(\eta_0,\epsilon_0)$ is satisfied as $f(0)\in W_0$. This deduces immediately from the observation that $\|\frac{d}{du}\Delta_{\eta_0}\circ{\Phi}_{\eta_0}\circ f(u)\|_1\leq 1$ as $f(u)\in  Q(\eta_0,\epsilon_0).$

b) Suppose now $N\geq 2$ and $f(0)\in W_0$. Fix $\theta_0\in (0,2\pi]$ and let $u_j=j e^{i\theta_0},\ \eta_j:=f(u_j)$ and $\epsilon_j=\epsilon(\eta_j)$. It is sufficient to show that $\overline{f[D(u_i,1)]}\subset  Q(\eta_0,K^i\epsilon_0) $ for $i\leq N-1$.

For $i=1$, this assertion is proved in a). Suppose that these inclutions are satisfied for $i\leq j<N-1$. Since $\eta_{j+1}\in  Q(\eta_0,K^j \epsilon_0)$, we have $\epsilon_{j+1}\leq C_4 K^j \epsilon_0<\alpha_1.$ Moreover,  since $\eta_0\in W_0,$ it implies that $\eta_{j+1}\in  V_0$ (see (\ref{Eq27})). We may apply a) to the restriction of $f$ to $D(u_{j+1},1)$
\begin{equation*}
\begin{split}
\overline{f[D(u_{j+1},1)]}&\subset  Q(\eta_{j+1},\epsilon_{j+1})\subset Q(\eta_{j+1},C_4 K^j\epsilon_0)\\
&\subset  Q(\eta_0,C_3 C_4 K^j\epsilon_0)=Q(\eta_0, K^{j+1}\epsilon_0).
  \end{split}
\end{equation*}
\end{proof}

For any sequence $\{\eta_p\}_p$ of points tending to the origin in $U_0\cap\{\rho<0\}=:U_0^-$, we associate with a sequence of points ${\eta'}_p=(\eta_{1p}, \cdots, \eta_{np}+\epsilon_p)$, $ \epsilon_p>0$, ${\eta'}_{p}$ in the hypersurface $\{\rho=0\}$. Consider the sequence of dilations $\Delta_{{\eta'}_{p}}^{\epsilon_p}$. Then $\Delta_{{\eta'}_{p}}^{\epsilon_p}\circ \Phi_{{\eta'}_p}({\eta}_p)=(0,\cdots,0,-1)$. By (\ref{Eq20}), we see that $\Delta_{{\eta'}_{p}}^{\epsilon_p}\circ \Phi_{{\eta'}_p}(\{\rho=0\}) $ is defined by an equation of the form
\begin{equation}\label{Eq28}
\begin{split}
Re w_n+ P_{{\eta'}_p}(w_1,\bar w_1)+\sum_{\alpha=2}^{n-1}|w_\alpha|^2+ \sum_{\alpha=2}^{n-1}Re &(Q^\alpha_{{\eta'}_p}(w_1,\bar w_1)w_\alpha)+\\
&+O(\tau({\eta'}_p,\epsilon_p))=0,
\end{split}
\end{equation}
where
\begin{equation*}
\begin{split}
&P_{{\eta'}_p}(w_1,\bar w_1):=\sum_{\substack{j+k\leq m\\
 j,k>0}} a_{j,k}({\eta'}_p) \epsilon_p^{-1} \tau({\eta'}_p,\epsilon_p)^{j+k}w_1^j \bar w_1^k,\\
&Q^\alpha_{{\eta'}_p}(w_1,\bar w_1):= \sum_{\substack{j+k\leq \frac{m}{2}\\
 j,k>0}} b^\alpha_{j,k}({\eta'}_p)\epsilon_p^{-1/2} \tau({\eta'}_p,\epsilon_p)^{j+k}w_1^j \bar w_1^k.
\end{split}
\end{equation*}

Note that from (\ref{Eq5}) we know that the coefficients of $P_{{\eta'}_p}$ and $Q^\alpha_{{\eta'}_p}$ are bounded by one. But the polynomials $Q^\alpha_{{\eta'}_p}$ are less important than $P_{{\eta'}_p}$. In \cite{Cho1}, S. Cho proved the following lemma.
\begin{Lemma} \cite[Lem. 2.4, p. 810]{Cho1}.\label{L5} $|Q^\alpha_{{\eta'}_p}(w_1,\bar w_1)|\leq \tau({\eta'}_p,\epsilon_p)^{\frac{1}{10}} $ for all $\alpha=2,\cdots, n-1$ and $|w_1|\leq 1$.
\end{Lemma}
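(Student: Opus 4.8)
The plan is to estimate directly the coefficients $b^\alpha_{j,k}(\eta'_p)\,\epsilon_p^{-1/2}\,\tau(\eta'_p,\epsilon_p)^{j+k}$ appearing in the definition of $Q^\alpha_{\eta'_p}$, exploiting the two competing definitions of $\tau$ in \eqref{Eq6}. Write $\tau = \tau(\eta'_p,\epsilon_p)$ for brevity. From \eqref{Eq5}, for each pair $(j,k)$ with $j+k = l' \leq m/2$ and $j,k>0$ we have $|b^\alpha_{j,k}(\eta'_p)| \leq B_{l'}(\eta'_p)$, and the very definition \eqref{Eq6} of $\tau$ forces $\tau \leq (\epsilon_p^{1/2}/B_{l'}(\eta'_p))^{1/l'}$, i.e. $B_{l'}(\eta'_p)\,\tau^{l'} \leq \epsilon_p^{1/2}$. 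Hence for each monomial,
\[
|b^\alpha_{j,k}(\eta'_p)|\,\epsilon_p^{-1/2}\,\tau^{j+k} \;\leq\; B_{l'}(\eta'_p)\,\epsilon_p^{-1/2}\,\tau^{l'} \;\leq\; 1.
\]
So the coefficients of $Q^\alpha_{\eta'_p}$ are bounded by one — this recovers the remark made just before the lemma, but is not yet the claimed gain of a power $\tau^{1/10}$.

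To get the extra factor, I would split according to whether $j+k$ is small or comparable to $m/2$. For $|w_1|\leq 1$ we have $|w_1^j \bar w_1^k| \leq |w_1|^{j+k}$, so the key observation is that a monomial of degree $l' = j+k$ with nonzero coefficient is automatically accompanied by $|w_1|^{l'}$, while the coefficient bound above has slack: in fact $B_{l'}(\eta'_p)\epsilon_p^{-1/2}\tau^{l'} \leq 1$ combined with \eqref{Eq7}, which gives $\epsilon_p^{1/2} \lesssim \tau \lesssim \epsilon_p^{1/m}$, can be rewritten. Indeed $\epsilon_p^{-1/2} \leq C\,\tau^{-1}$ by the lower bound $\tau \gtrsim \epsilon_p^{1/2}$ in \eqref{Eq7}, hence
\[
|b^\alpha_{j,k}(\eta'_p)|\,\epsilon_p^{-1/2}\,\tau^{j+k}\,|w_1|^{j+k} \;\lesssim\; B_{l'}(\eta'_p)\,\tau^{l'-1}\,|w_1|^{l'}.
\]
Now I use that the only terms that survive have $l' \geq 2$ (since $j,k>0$ forces $j+k\geq 2$), so $\tau^{l'-1} \leq \tau$ whenever $l' \geq 2$; since $|w_1|\leq 1$ and $B_{l'}(\eta'_p)$ is uniformly bounded on $U_0$ (the coefficients of $\rho$ and its derivatives are bounded on the compact closure of a small neighborhood), summing the finitely many monomials yields $|Q^\alpha_{\eta'_p}(w_1,\bar w_1)| \lesssim \tau$, and then $\tau \leq \tau^{1/10}$ for $\tau$ small (which holds since $\eta_p \to 0$ forces $\epsilon_p \to 0$ and hence $\tau \to 0$ by \eqref{Eq7}).

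The main obstacle I anticipate is bookkeeping the uniformity of the implied constants: one must check that the constant $B_{l'}(\eta'_p)$ — equivalently, all derivatives $\partial^{j+k}\rho/\partial w_1^j\partial\bar w_1^k$ at points of $U_0$ — is bounded independently of $p$, which is where smoothness of $\partial\Omega$ near $\xi_0$ and the fact that $\eta'_p$ lies in a fixed compact neighborhood of the origin enter; and that the exponent $1/10$ is simply any fixed positive number strictly less than $1$, chosen once and for all so the inequality $\tau^{l'-1}\leq \tau^{1/10}$ (for $l'\geq 2$ means $\tau^{l'-1}\le \tau \le \tau^{1/10}$) is robust. No genuinely deep input is needed beyond \eqref{Eq5}, \eqref{Eq6} and \eqref{Eq7} already established; the content is that the normalization in \eqref{Eq6} makes the $b$-coefficients decay while the $a$-coefficients (governing $P_{\eta'_p}$) stay of size one, which is exactly why "the polynomials $Q^\alpha_{\eta'_p}$ are less important than $P_{\eta'_p}$." I would simply cite \cite[Lem.~2.4]{Cho1} for the precise constant if a cleaner reference is preferred.
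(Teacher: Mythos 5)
The paper does not actually prove this lemma; it is quoted verbatim from Cho \cite[Lem.~2.4]{Cho1}, so the only question is whether your self-contained argument is correct. It is not: the pivotal step is the inequality $\epsilon_p^{-1/2}\leq C\,\tau^{-1}$, which you derive from ``the lower bound $\tau\gtrsim\epsilon_p^{1/2}$ in (\ref{Eq7})''. That bound gives exactly the opposite direction: $\tau\geq c\,\epsilon_p^{1/2}$ implies $\tau^{-1}\leq c^{-1}\epsilon_p^{-1/2}$, i.e.\ $\epsilon_p^{-1/2}\gtrsim\tau^{-1}$. The direction you need, $\tau\lesssim\epsilon_p^{1/2}$, holds only at strictly pseudoconvex points; at a point of type $m>2$ one typically has $\tau\approx\epsilon_p^{1/m}\gg\epsilon_p^{1/2}$, which is precisely the case of interest. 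Consequently the displayed estimate $|b^\alpha_{j,k}|\,\epsilon_p^{-1/2}\tau^{j+k}|w_1|^{j+k}\lesssim B_{l'}\tau^{l'-1}|w_1|^{l'}$ is unjustified and everything after it collapses; what survives of your argument is only the coefficient bound $\leq 1$, which is the remark the paper already makes in the sentence preceding the lemma.

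This is not a fixable typo: the definition (\ref{Eq6}) alone cannot yield any gain. If the minimum in (\ref{Eq6}) is attained by the term $\big(\epsilon_p^{1/2}/B_{l'}\big)^{1/l'}$, then $B_{l'}\epsilon_p^{-1/2}\tau^{l'}=1$ exactly, so a bound of the form $\tau^{1/10}$ (which tends to $0$) cannot follow from (\ref{Eq5})--(\ref{Eq7}) together with boundedness of the derivatives of $\rho$. Your conclusion $|Q^\alpha_{\eta'_p}|\lesssim\tau$, strictly stronger than the stated $\tau^{1/10}$, should itself have been a warning sign. The genuine input in Cho's proof is the pseudoconvexity of the boundary: a Cauchy--Schwarz inequality for the positive semidefinite Levi form controls the mixed coefficients $b^\alpha_{j,k}$ (one derivative falling on $z_\alpha$) by the pure coefficients $a_{j,k}$ and the strictly positive Levi eigenvalues in the $z_\alpha$-directions, and it is this extra decay of the $B_{l'}$ relative to the $A_l$ that produces the factor $\tau^{1/10}$. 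Since your argument never invokes pseudoconvexity, it cannot reach the conclusion; the right course is to keep the citation to \cite[Lem.~2.4]{Cho1} or to reproduce that Levi-form argument in full.
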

By Lemma \ref{L5}, it follows that after taking a subsequence, $\Delta_{{\eta'}_{p}}^{\epsilon_p}\circ \Phi_{{\eta'}_p}(U_0^-)$ converges to the following domain
\begin{equation}\label{Eq29} 
M_P:=\{\hat\rho:=Re w_n+ P(w_1,\bar w_1)+\sum_{\alpha=2}^{n-1}|w_\alpha|^2<0
\}.
\end{equation}
where $P(w_1,\bar w_1)$ is a polynomial of degree $\leq m$ without harmonic terms.
 
Since $M_P$ is a smooth limit of the pseudoconvex domains $ \Delta_{{\eta'}_{p}}^{\epsilon_p}\circ \Phi_{{\eta'}_p}(U_0^-) $, it is pseudoconvex. Thus the function $\hat\rho$ in (\ref{Eq29}) is plurisubharmonic, and hence $P$ is a subharmonic polynomial whose Laplacian does not vanish identically.
\begin{Lemma} The domain $M_P$ is Brody hyperbolic.
\end{Lemma}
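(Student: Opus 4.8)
The plan is to show directly that every holomorphic map $f=(f_1,\dots,f_n)\colon\CC\to M_P$ is constant. Set $u:=\operatorname{Re} f_n$, which is harmonic on $\CC$, and $g:=P(f_1,\bar f_1)+\sum_{\alpha=2}^{n-1}|f_\alpha|^2$. Since $P$ is subharmonic on $\CC$ and $f_1$ is holomorphic, the pullback $P(f_1,\bar f_1)$ is subharmonic, and each $|f_\alpha|^2$ is subharmonic, so $g$ is subharmonic; moreover the defining inequality of $M_P$ reads precisely $u+g<0$ on all of $\CC$. Hence $-(u+g)$ is a positive superharmonic function on $\CC$, and by the parabolicity of the plane (every positive superharmonic function on $\CC$ is constant) we obtain $u+g\equiv c$ for some constant $c>0$. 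In particular $g=c-u$ is harmonic.

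Next I would use that a finite sum of subharmonic functions which is harmonic forces each summand to be harmonic: if $s_1+\dots+s_k=h$ with each $s_i$ subharmonic and $h$ harmonic, then $s_1=h-\sum_{i\ge 2}s_i$ is superharmonic, hence harmonic, and so on. Applying this to $g$, each $|f_\alpha|^2$ is harmonic, so $0=\Delta|f_\alpha|^2=4|f_\alpha'|^2$ and $f_\alpha$ is constant for $\alpha=2,\dots,n-1$; and $P(f_1,\bar f_1)$ is harmonic. This is the point at which the hypothesis $\Delta P\not\equiv 0$ is used: by the chain rule $\Delta\bigl(P(f_1,\bar f_1)\bigr)=(\Delta P)(f_1,\bar f_1)\,|f_1'|^2\equiv 0$. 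If $f_1$ were nonconstant, the open mapping theorem would give that $f_1(\CC)$ contains a nonempty open subset of $\CC$ on which the polynomial $\Delta P$ vanishes identically, forcing $\Delta P\equiv 0$, a contradiction. Hence $f_1$ is constant, so $u=\operatorname{Re} f_n=c-g$ is constant, and therefore the entire function $f_n$ has constant real part and is itself constant. Thus $f$ is constant and $M_P$ is Brody hyperbolic.

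Every step here is either elementary or a standard fact of potential theory, so the argument is short; the one point that genuinely drives it is the invocation of the parabolicity of $\CC$, which is what upgrades the defining inequality $u+g<0$ to the rigid identity $u+g\equiv c$ from which the components are then seen to be constant one after another. Minor care is needed in the chain-rule identity for $\Delta\bigl(P(f_1,\bar f_1)\bigr)$ (it holds where $f_1'\ne 0$ and extends by continuity) and in the final remark that a nonzero polynomial cannot vanish on a nonempty open set.
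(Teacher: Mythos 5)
Your proof is correct and takes essentially the same route as the paper's: both rest on the fact that a negative subharmonic function on $\CC$ must be constant, applied to the pullback of the defining function of $M_P$, after which the constancy of the components is peeled off one by one using $\Delta\bigl(P(f_1,\bar f_1)\bigr)=(\Delta P)(f_1,\bar f_1)|f_1'|^2$ together with $\Delta P\not\equiv 0$, and $\Delta|f_\alpha|^2=4|f_\alpha'|^2$. The only blemish is a sign slip: since $u+g<0$ the constant $c$ is negative, not positive, which affects nothing in the argument.
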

\begin{proof} If $\varphi:\CC\to M_P$ is holomorphic, then the subharmonic functions $Re \varphi_n+ P\circ \varphi_1+\sum_{\alpha=2}^{n-1}|\varphi_\alpha|^2$ and $Re \varphi_n+ P\circ \varphi_1$ are negative on $\CC$. Consequently, they are constant. This implies that $P\circ \varphi_1$ is harmonic. Hence $\varphi_1, Re\varphi_n$ and $\varphi_n$ are constant. In addition, the function $\sum_{\alpha=2}^{n-1}|\varphi_\alpha|^2$ is also constant and hence $\varphi_\alpha \ (2\leq \alpha\leq n-1)$ are constant.
\end{proof}

\subsection{Estimates of Kobayashi metric}

Recall that the Kobayashi metric $K_\Omega$ of $\Omega$ is defined by
$$K_\Omega(\eta,\overrightarrow{X}):=\inf\{\frac{1}{R}|\ \exists f: D\to \Omega\ \text{such that} f(0)=\eta, f'(0)=R \overrightarrow{X}\}.$$

By the same argument as in \cite{Ber} page 93, there exists a neighborhood $U$ of the origin with $U\subset U_0$ such that 
$$K_\Omega(\eta,\overrightarrow{X})\leq K_{\Omega\cap U_0}(\eta,\overrightarrow{X})\leq  2K_\Omega(\eta,\overrightarrow{X})\ \text{for all}\ \eta\in U\cap\Omega.$$

We need the following lemma (see \cite{Ber3}) 

\begin{Lemma}\label{L6} Let $(X,d)$ be a complete metric space and let $M: X\to \RR^+$ be a locally bounded function. Then, for all $\sigma>0$ and for all $u\in X$ satisfying  $M(u)>0$, there exists $v\in X$ such that

\begin{enumerate}
\item[(i)] $d(u,v )\leq \frac{2}{\sigma M(u)}$
\item[(ii)] $M(v)\geq M(u)$
\item[(iii)] $M(x)\leq 2 M(v)$ if $d(x,v)\leq \frac{1}{\sigma M(v)}$.
\end{enumerate}
\end{Lemma}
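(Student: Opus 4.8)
This is a version of the Ekeland-type variational principle (often attributed in this context to a lemma of Royden, or to the argument appearing in Berteloot's work), and I would prove it by an iterative construction producing a Cauchy sequence whose limit is the desired point $v$.

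First I would set up the iteration. Fix $\sigma>0$ and $u\in X$ with $M(u)>0$. Define $u_0=u$ and, having constructed $u_k$ with $M(u_k)\geq M(u)>0$, consider the set
$$S_k=\{x\in X:\ d(x,u_k)\leq \tfrac{1}{\sigma M(u_k)}\ \text{and}\ M(x)\geq M(u_k)\}.$$
This set is nonempty (it contains $u_k$), so I can choose $u_{k+1}\in S_k$ with $M(u_{k+1})\geq \tfrac{1}{2}\sup_{x\in S_k}M(x)$ (a "near-supremum" selection; note the sup is finite if $M$ is locally bounded, after one checks the $S_k$ stay in a bounded region — which follows from the distance estimates below). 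By construction $M(u_{k+1})\geq M(u_k)$, so the sequence $M(u_k)$ is nondecreasing, and $d(u_{k+1},u_k)\leq \tfrac{1}{\sigma M(u_k)}\leq \tfrac{1}{\sigma M(u)}$.

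Next I would prove convergence. The key point is that $M(u_k)$ is nondecreasing, so $\tfrac{1}{\sigma M(u_k)}$ is nonincreasing, but that alone is not enough for summability; the real mechanism is that the "room to move" shrinks geometrically. Concretely, for $m>k$ the point $u_m$ lies in $S_k$ (by induction, using that $d(u_m,u_{k+1})+d(u_{k+1},u_k)$ can be controlled and that $M(u_m)\geq M(u_{k+1})\geq M(u_k)$), hence $M(u_m)\leq \sup_{S_k}M\leq 2M(u_{k+1})$. Combined with $M(u_{k+1})\leq M(u_m)$ this pins $M(u_k)$ between $M(u_{k+1})$ and $2M(u_{k+1})$, forcing the increments $d(u_{k+1},u_k)$ to be summable after a geometric comparison; then $\{u_k\}$ is Cauchy, and by completeness $u_k\to v$ for some $v\in X$. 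Passing to the limit gives $M(v)\geq \lim M(u_k)\geq M(u)$, which is (ii), and summing the telescoping distance estimates gives $d(u,v)\leq \sum_k d(u_{k+1},u_k)\leq \tfrac{2}{\sigma M(u)}$ once the geometric decay is quantified, which is (i).

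Finally, for (iii): suppose $d(x,v)\leq \tfrac{1}{\sigma M(v)}$. Since $M(v)=\lim M(u_k)$ and $d(u_k,v)\to 0$, for large $k$ we have $d(x,u_k)\leq d(x,v)+d(v,u_k)\leq \tfrac{1}{\sigma M(u_k)}$ (using $M(u_k)\leq M(v)$ to enlarge the allowed radius only helps, and a small-$\varepsilon$ argument absorbs the $d(v,u_k)$ term), so $x\in S_k$, whence $M(x)\leq \sup_{S_k}M\leq 2M(u_{k+1})\leq 2M(v)$. The main obstacle I anticipate is the bookkeeping in the convergence step — one must choose the selection constant and extract the geometric decay of $d(u_{k+1},u_k)$ carefully so that the total distance is exactly bounded by $\tfrac{2}{\sigma M(u)}$ rather than some larger constant; this is where the factor $2$ in the "near-supremum" choice and the factor $2$ in conclusion (iii) must be balanced against each other.
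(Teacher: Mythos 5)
Your construction has a genuine gap: the sequence $(u_k)$ you build need not be Cauchy and need not stay within distance $\frac{2}{\sigma M(u)}$ of $u$. The claimed geometric decay of the increments rests on the assertion that $u_m\in S_k$ for all $m>k$, but that is precisely what fails: a priori you only know $d(u_{j+1},u_j)\leq\frac{1}{\sigma M(u_j)}\leq\frac{1}{\sigma M(u_k)}$ for $j\geq k$, so $d(u_m,u_k)$ can be as large as $\frac{m-k}{\sigma M(u_k)}$ and leave $S_k$ after one step. The argument is circular (you need $u_m\in S_k$ to get the decay, and the decay to get $u_m\in S_k$), and it is in fact false: take $X=\RR$, $M\equiv 1$; every point of $S_k$ is a legitimate near-supremum choice, so you may take $u_{k+1}=u_k+\frac{1}{\sigma}$, and then $u_k\to\infty$, violating (i). Two further problems: in a general complete metric space local boundedness does not imply boundedness on closed balls, so $\sup_{S_k}M$ may be $+\infty$ and the selection $M(u_{k+1})\geq\frac12\sup_{S_k}M$ may be impossible; and the step $M(v)\geq\lim M(u_k)$ at the limit point requires upper semicontinuity of $M$, which is not among the hypotheses.

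The paper's proof iterates on the failure of conclusion (iii) rather than on near-suprema, and that is where both factors of $2$ come from. Set $v_0=u$; if (iii) fails at $v_j$, choose $v_{j+1}$ with $d(v_{j+1},v_j)\leq\frac{1}{\sigma M(v_j)}$ and $M(v_{j+1})> 2M(v_j)$. Then $M(v_j)\geq 2^jM(u)$, so $d(v_{j+1},v_j)\leq\frac{1}{\sigma 2^jM(u)}$ is genuinely geometric, the total displacement is at most $\frac{2}{\sigma M(u)}$ (giving (i)), and (ii) holds at every stage by construction. If the process never terminated, completeness would give $v_j\to v_\infty$ while $M(v_j)\to\infty$, contradicting local boundedness of $M$ at $v_\infty$; hence it stops at some $v=v_j$ satisfying (iii). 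Since $v$ is one of the constructed points, no limiting value of $M$ is ever invoked.
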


\begin{proof}
If $v$ does not exist, one contructs a sequence $(v_j)$ such that $v_0=u$, $M(v_{n+1})\geq 2 M(v_j)\geq 2^{n+1}M(u)$ and $d(v_{n+1},v_j)\leq \frac{1}{\sigma M(v_j)}\leq \frac{1}{\sigma M(u) 2^n}$. This sequence  is Cauchy. 
\end{proof}

\begin{theorem}\label{T2}Let $\Omega$ be a domain in $\CC^n.$  Suppose that $\partial \Omega$ is pseudoconvex, of finite type and is smooth of class $C^\infty$ near a boundary point $p\in\partial \Omega$ and suppose that the Levi form has rank at least $n-2$ at $\xi_0$. Then, there exists a neighborhood $V$ of $\xi_0$ such that

$$M(\eta,\overrightarrow{X})\lesssim K_{\Omega}(\eta,\overrightarrow{X})\lesssim  M(\eta,\overrightarrow{X})\ \text{for all} \ \eta\in V\cap\Omega.$$
\end{theorem}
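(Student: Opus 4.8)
The plan is to establish the two inequalities $M(\eta,\overrightarrow{X})\lesssim K_\Omega(\eta,\overrightarrow{X})$ and $K_\Omega(\eta,\overrightarrow{X})\lesssim M(\eta,\overrightarrow{X})$ separately, using on one side the localization of the Kobayashi metric already recorded (so that we may work with $\Omega\cap U_0$ in place of $\Omega$), and on the other side the polydisc family $Q(\eta,\epsilon)$ together with its homogeneity from Proposition \ref{P2} and the covering/iteration mechanism of Lemma \ref{L4}. For the upper estimate $K_\Omega\lesssim M$, I would fix $\eta\in\Omega$ near the origin and $\overrightarrow{X}$, and exhibit an explicit analytic disc into $\Omega\cap U_0$ adapted to the polydisc $Q(\eta,\epsilon(\eta))$: in the Cho coordinates $w=\Phi_\eta(z)$ the domain near $\eta$ is comparable to the model $\{\operatorname{Re}w_n+\sum a_{j,k}(\eta)w_1^j\bar w_1^k+\sum_{\alpha}|w_\alpha|^2<0\}$, and a disc of radius $\approx\tau_k(\eta,\epsilon(\eta))$ in the $k$-th coordinate direction stays inside $\Omega$ for $\epsilon(\eta)\approx|\rho(\eta)|$; pushing this back through $\Phi_\eta^{-1}$ and computing $f'(0)$ gives $K_\Omega(\eta,\overrightarrow{X})\lesssim \|\Delta_\eta\circ\Phi_\eta'(\eta)\overrightarrow{X}\|_1=M(\eta,\overrightarrow{X})$. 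The construction of this disc is essentially the Cho/Catlin construction of extremal discs and is routine once (\ref{Eq19})--(\ref{Eq21}) are in hand.

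The harder direction is the lower estimate $M(\eta,\overrightarrow{X})\lesssim K_\Omega(\eta,\overrightarrow{X})$, and this is where Lemma \ref{L6} enters. The idea is the standard Berteloot rescaling/minimum-principle argument: suppose the inequality fails along a sequence; normalize by applying Lemma \ref{L6} in the complete metric space $(\Omega\cap U_0, K_{\Omega\cap U_0})$ with the locally bounded function $M(\eta,\overrightarrow{X})$ (for fixed direction, or rather with $M$ of the unit-length direction) to replace the bad points by points $v_p$ at which $M$ is almost locally maximal in the precise sense (i)--(iii). Then any competing analytic disc $f:D\to\Omega\cap U_0$ with $f(0)=v_p$ and $f'(0)=R_p\overrightarrow{X}$, after dilating the source disc to $D_{N_p}$ with $N_p\to\infty$, has $M(f(u),f'(u))$ uniformly small on $D_{N_p}$ by property (iii); Lemma \ref{L4} then forces $\overline{f(D_{N_p})}\subset Q[f(0),K^{N_p}\epsilon(f(0))]$, and since $\epsilon(f(0))=|\rho(f(0))|\to 0$ these polydiscs shrink to the boundary, contradicting that $f(0)=v_p$ is a fixed interior point of $\Omega\cap U_0$ unless $N_p$ stays bounded — which bounds $1/R_p$ from below and yields the desired estimate $M\lesssim K_\Omega$ at $v_p$, hence (undoing the normalization via (i) and the comparability of $M$ on $Q$-balls) at the original points.

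The main obstacle I anticipate is the careful bookkeeping in the rescaling step: one must check that the hypothesis $M(f(u),f'(u))\le A$ of Lemma \ref{L4} is genuinely inherited from the near-maximality property (iii) of $v_p$ — this requires knowing that $\epsilon(f(u))$ and $\tau_k(f(u),\epsilon(f(u)))$ stay comparable to their values at $v_p$ along the disc, which in turn requires that $f(D_{N_p})$ already lies in a controlled $Q$-ball, so the argument is inductive on the radius exactly as in the proof of Lemma \ref{L4}. One also has to verify that the dilated metric $M$ is comparable to the flat metric on the model $M_P$ from (\ref{Eq29}), using that $M_P$ is (Brody, hence by the earlier lemmas and tautness) Kobayashi hyperbolic so that its Kobayashi metric is nondegenerate; this is where pseudoconvexity and finite type are used, via Lemma \ref{L5} and the convergence of $\Delta^{\epsilon_p}_{\eta'_p}\circ\Phi_{\eta'_p}(U_0^-)$ to $M_P$. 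Everything else is the localization inequality already stated and the elementary estimates (\ref{Eq7}), (\ref{Eq8}), (\ref{Eq22})--(\ref{Eq27}).
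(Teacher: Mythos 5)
Your overall framework matches the paper's: localize, treat $K_\Omega\lesssim M$ as the easy direction coming from the inclusion of a polydisc of polyradius $\tau_k(\eta,\epsilon(\eta))$ in $\Omega$, and prove $M\lesssim K_\Omega$ by contradiction using the renormalization Lemma \ref{L6} followed by the attraction Lemma \ref{L4}. But the step where you actually derive the contradiction is wrong. You claim that $\overline{f(D_{N_p})}\subset Q[f(0),K^{N_p}\epsilon(f(0))]$ with $\epsilon(f(0))\to 0$ "contradicts that $f(0)=v_p$ is a fixed interior point of $\Omega\cap U_0$ unless $N_p$ stays bounded." The renormalized base points are \emph{not} fixed interior points: $v_p=f_p(\tilde a_p)$ tends to the boundary point $\xi_0$ (that is the whole situation being studied), so there is nothing contradictory about the image polydiscs shrinking toward $\partial\Omega$. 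Moreover $N_p$ is unbounded by construction — after rescaling the source by the factor $A/(2M_p(\tilde a_p))$ with $M_p(\tilde a_p)\geq p^2$ one deliberately obtains maps $g_p$ defined on $D_p$ with $p\to\infty$ — so "unless $N_p$ stays bounded" cannot be the exit from the argument.

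The missing idea is the passage to the rescaled limit. One composes $g_p$ with the dilations, setting $\varphi_p:=\Delta^{\epsilon_p}_{\eta'_p}\circ\Phi_{\eta'_p}\circ g_p$; the inclusion from Lemma \ref{L4} shows $\varphi_p(D_N)$ lies in a fixed polydisc of radius comparable to $K^N$, so Montel plus a diagonal extraction gives an entire curve $\varphi:\CC\to M_P$ with $M_P$ as in (\ref{Eq29}). Brody hyperbolicity of $M_P$ forces $\varphi$ to be constant, while the normalization $M[g_p(0),g_p'(0)]=A/2$ translates, after comparing ${\Phi'}_{\eta_p}$ with ${\Phi'}_{\eta'_p}$, into $\|\varphi'(0)\|_1\gtrsim A/2>0$ — that is the contradiction. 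You do invoke the Brody hyperbolicity of $M_P$, but only in a side remark about comparing $M$ with a flat metric on the model; in the actual proof it is the engine that kills the limiting entire curve. A smaller point: you propose applying Lemma \ref{L6} in $(\Omega\cap U_0,K_{\Omega\cap U_0})$, whose completeness is not established; the paper applies it to $M_p(t)=M(f_p(t),f_p'(t))$ on the source disc $\bar D_{1/2}$ with the Euclidean metric, where completeness is free.
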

\begin{proof}[Proof of Theorem \ref{T2}]
The second inequality is obvious, by the definition. We are going to prove the first inequality. We may also assume that $\xi_0=(0,\cdots,0)$. It  suffices  to show that for $\eta$ near $0$ and $\overrightarrow{X}$ is not zero, we have
$$K_\Omega\biggl(\eta, \dfrac{\overrightarrow{X}}{M(\eta,\overrightarrow{X})}\biggl)\gtrsim 1.$$

Suppose that this is not true. Then there exist $f_p: D\to \Omega\cap U$ such that $f_p(0)=\eta_p$ tends to the origin and ${f_p}'(0)=R_p \dfrac{\overrightarrow{X}_p}{M(\eta_p, \overrightarrow{X}_p)},$ where $R_p\to \infty$ as $p\to \infty$. We may assume that $R_p\geq p^2$. Then, one has $M(f_p(0), {f'}_p(0))=M\biggl(\eta_p, R_p\dfrac{\overrightarrow{X}_p}{M(\eta_p, \overrightarrow{X}_p)}\biggl)=R_p\geq p^2$. Apply Lemma \ref{L6} to $M_p(t):=M(f_p(t)), {f_p}'(t))$ on $\bar D_{1/2}$ with $u=0$ and $\sigma=\frac{1}{p}$. This gives $\tilde a_p\in\bar D_{1/2} $ such that $|\tilde a_p|\leq \dfrac{2p}{M_p(0)}$ and $M_p(\tilde a_p)\geq M_p(0)\geq p^2$. Moreover, 
$$M_p(t)\leq 2 M_p(\tilde a_p) \ \text{on}\ D(\tilde a_p, \frac{p}{M_p(\tilde a_p)}).$$

We define a sequence $\{g_p\}\subset Hol(D_p, \Omega)$ by $g_p(t):=f_p\biggl(\tilde a_p+ \dfrac{At}{2 M_p(\tilde a_p)}\biggl)$. This sequence satisfies the estimates
$$M[g_p(t), {g_p}'(t)]\leq A \ \text{on}\  D_p. $$

Since $\tilde a_p\to 0$, the series $g_p(0)=f_p(\tilde a_p)$ tends to the origin. Choose a subsequence, if neccessary, we may assume that $K^p\epsilon(g_p(0))\leq \alpha_1,$  where $K, A$ and $\alpha_1$ are the constants in Lemma \ref{L4}. It follows from Lemma \ref{L4} that
\begin{equation}\label{Eq30}
g_p(D_N)\subset Q[g_p(0),K^N\epsilon(g_p(0))] \ \text{for}\  N\leq p. 
\end{equation}

We may now apply the method of dilation of the coordinates. Set $\eta_p:=g_p(0)$ and ${\eta'}_p:=\eta_p+(0,\cdots,0,\epsilon_p),$ where $\epsilon_p>0$ and $\rho({\eta'}_p)=0$. It is easy to see that $\epsilon_p\approx \epsilon(\eta_p) $ and $\eta_p \in Q({\eta'}_p,c\epsilon_p)$ for $c\geq 1$ is some constant. It follows from (\ref{Eq30}) and (\ref{Eq24}) that, for some constant $C\geq 1,$
\begin{equation}\label{Eq31}
g_p(D_N)\subset Q[{\eta'}_p, CK^N\epsilon_p] \ \text{for}\  N\leq p. 
 \end{equation}

Set $\varphi_p:=\Delta_{{\eta'}_p}^{\epsilon_p}\circ\Phi_{{\eta'}_p} \circ g_p$. The inclutions (\ref{Eq24}) imply that 

$$\varphi_p(D_N)\subset D_{\sqrt{C K^N}}\times\cdots\times D_{\sqrt{C K^N}}\times  D_{C K^N}.$$

By using the Montel's theorem and a diagonal process, there exists a subsequence $\{\varphi_{p_k}\}$ of $\{\varphi_{p}\}$ which converges on compact subsets of $\CC $ to an entire curve $\varphi:\CC\to M_P $. Since $M_P$ is Brody hyperbolic, $\varphi$ must be constant. 

On the other hand, we have 
\begin{equation*}
\begin{split}
\dfrac{A}{2}=M[g_p(0),{g_p}'(0)]=\sum_{k=1}^n \frac{|({\Phi'}_{\eta_p}(\eta_p){g_p}'(0))_k|}{\tau_k(\eta_p,\epsilon(\eta_p))}.
\end{split}
\end{equation*}

Since $\epsilon_p\approx \epsilon(\eta_p)\ ,\ \eta_p \in Q({\eta'}_p,c\epsilon_p)$ and ${\Phi'}_{\eta_p}(\eta_p)\circ \big({\Phi'}_{{\eta'}_p}(\eta_p)\big )^{-1}$ approaches to $Id$ as $p\to \infty $, we have 
\begin{equation*}
\begin{split}
\frac{A}{2}\lesssim \sum_{k=1}^n \frac{|({\Phi'}_{{\eta'}_p}(\eta_p){g_p}'(0))_k|}{\tau_k({\eta'}_p,\epsilon_p)}=\|{\varphi_p}'(0)\|_1.
\end{split}
\end{equation*}

Thus $\|{\varphi}'(0)\|_1=\lim_{p_k\to\infty}\|{\varphi_{p_k}}'(0)\|_1\gtrsim \frac{A}{2}$.
\end{proof}
\subsection{Normality of the families of holomorphic mappings}

First of all, we prove the following theorem. 
\begin{theorem}\label{T:5}
Let $\Omega$ be a domain in $\CC^n$. Suppose that $\partial \Omega$ is pseudoconvex, of finite type and is smooth of class $C^\infty$ near a boundary point $(0,\cdots,0)\in\partial \Omega.$ Suppose that the Levi form has rank at least $n-2$ at $(0,\cdots,0).$ Let $\omega $ be a domain in $\CC^k$ and $\varphi_p:\omega\to \Omega$ be a sequence of holomorphic mappings such that $\eta_p:=\varphi_p(a)$ converges to $(0,\cdots,0)$ for some point $a\in\omega$. Let $(T_p)_p$ be a sequence of automorphisms of $\CC^n$ which associates with the sequence $(\eta_p)_p$ by the  method of the dilation of coordinates (i.e,\ $T_p=\Delta_{{\eta'}_p}^{\epsilon_p}\circ \Phi_{{\eta'}_p}$). Then $(T_p\circ \varphi_p)_p$ is normal and its limits are holomorphic mappings from $\omega$ to the domain of the form 
$$M_P=\{(w_1,\cdots,w_n)\in \CC^n:Re  w_n+P(w_1,\bar w_1)+\sum_{\alpha=2}^{n-1}|w_\alpha|^2<0\},$$
where $P\in \P_{2m}$. 
\end{theorem}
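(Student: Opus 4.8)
The plan is to establish normality of the sequence $(T_p\circ\varphi_p)_p$ by combining the Kobayashi-metric estimates of Theorem \ref{T2} with the convergence properties of the scaled domains, and then to identify the possible limits as holomorphic mappings into a model domain $M_P$. The starting observation is that by Remark \ref{R:3} (or directly by Proposition \ref{T:8}(a)), the hypothesis $\eta_p=\varphi_p(a)\to 0$ forces $\varphi_p\to 0$ uniformly on compact subsets of $\omega$; consequently, for any fixed compact $L\subset\omega$ and any prescribed neighborhood $U$ of the origin, we have $\varphi_p(L)\subset\Omega\cap U$ for all large $p$. This lets us work entirely inside the neighborhood $V$ of Theorem \ref{T2}, where $K_\Omega\asymp M$.

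First I would use the distance-decreasing property of the Kobayashi metric: for $z\in L$ and a tangent vector $\overrightarrow{X}$, the quantity $K_\Omega(\varphi_p(z),d\varphi_p(z)\overrightarrow{X})$ is bounded above by the Kobayashi length of $\overrightarrow{X}$ at $z$ in $\omega$, which (for $z$ ranging over a fixed compact set) is uniformly bounded. By Theorem \ref{T2} this gives $M(\varphi_p(z),d\varphi_p(z)\overrightarrow{X})\lesssim 1$ uniformly on $L$. Unwinding the definition of $M$, this says precisely that $\|\Delta_{\eta_p}\circ\Phi'_{\eta_p}(\eta_p)\,d\varphi_p(z)\overrightarrow{X}\|_1\lesssim 1$; after checking — exactly as in the proof of Theorem \ref{T2}, using $\eta_p\in Q({\eta'}_p,c\epsilon_p)$ and that $\Phi'_{\eta_p}(\eta_p)\circ(\Phi'_{{\eta'}_p}(\eta_p))^{-1}\to\mathrm{Id}$ — that $\Delta_{{\eta'}_p}^{\epsilon_p}$ and $\Delta_{\eta_p}$ may be interchanged up to a constant, we obtain that $(T_p\circ\varphi_p)$ has uniformly bounded derivatives on compacts. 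Combined with the fact that $T_p(\eta_p)=(0,\dots,0,-1)$ stays put, this yields a uniform bound on $T_p\circ\varphi_p$ on compact subsets of $\omega$, hence, by Montel, a subsequence converging u.c.c.\ to some holomorphic map $\varphi:\omega\to\CC^n$.

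Next I would show the limit lands in $M_P$. The scaled defining functions $\rho_{{\eta'}_p}^{\epsilon_p}=\epsilon_p^{-1}\rho\circ\Phi_{{\eta'}_p}^{-1}\circ(\Delta_{{\eta'}_p}^{\epsilon_p})^{-1}$ converge, by \eqref{Eq20} together with Lemma \ref{L5} and the boundedness of the coefficients of $P_{{\eta'}_p}$, after passing to a further subsequence, to $\hat\rho=Re\,w_n+P(w_1,\bar w_1)+\sum_{\alpha=2}^{n-1}|w_\alpha|^2$ with $P\in\P_{2m}$; equivalently $T_p(\Omega\cap U_0^-)\to M_P$ in the sense of Definition 2.1. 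Since each $T_p\circ\varphi_p$ maps $\omega$ into $T_p(\Omega\cap U)$, the limit $\varphi$ maps $\omega$ into $\overline{M_P}$, and since $\rho_{{\eta'}_p}^{\epsilon_p}(T_p\circ\varphi_p(a))=\epsilon_p^{-1}\rho(\eta_p)\to-1<0$, the image of $a$ is an interior point; the maximum principle applied to $\hat\rho\circ\varphi$ (a plurisubharmonic function $\le 0$ on $\omega$, strictly negative somewhere) then forces $\varphi(\omega)\subset M_P$. Finally, to upgrade "some subsequence converges" to genuine normality, I would appeal to Lemma \ref{L:4}: writing $M_P$ in the form $M_k$ there (with $\sigma_k$ the subharmonic pieces of the $\rho_{{\eta'}_p}^{\epsilon_p}$, whose Laplacian mass tends to $+\infty$ because $\Delta P\not\equiv 0$ and the type is exactly $m$), every subsequence of $(T_p\circ\varphi_p)$ with bounded value at $a$ has a further subsequence converging u.c.c.\ to an element of $Hol(\omega,M_P)$, which is the assertion.

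The main obstacle I anticipate is the bookkeeping in the derivative estimate — specifically, verifying that the pseudometric $M$ computed with $\Phi_{\eta_p}$ at $\eta_p$ is comparable to the norm $\|\Delta_{{\eta'}_p}^{\epsilon_p}\circ\Phi'_{{\eta'}_p}(\eta_p)(\cdot)\|_1$ that actually governs $(T_p\circ\varphi_p)'$. This requires controlling the composition $\Phi_{\eta_p}\circ\Phi_{{\eta'}_p}^{-1}$ and its derivative uniformly in $p$, which is precisely the content of the chain-rule computation $\Lambda={\varphi'}_1\circ\cdots\circ{\varphi'}_5$ carried out in the proof of Lemma \ref{L4}; one needs $\eta_p$ and ${\eta'}_p$ to lie in a common pseudoball $Q({\eta'}_p,c\epsilon_p)$ with $\epsilon_p\approx\epsilon(\eta_p)$ so that Lemma \ref{L34} applies. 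A secondary subtlety is ensuring the convergence of the scaled domains can be arranged along the \emph{same} subsequence used for the Montel extraction, which is handled by a standard diagonal argument.
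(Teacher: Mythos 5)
Your overall strategy matches the paper's in outline: both start from the Kobayashi-metric comparison of Theorem \ref{T2} and end with Montel applied to the scaled maps, whose images on compacts are trapped in bounded polydiscs. But there is a genuine gap at the step where you pass from $M(\varphi_p(z),d\varphi_p(z)\overrightarrow{X})\lesssim 1$ to ``uniformly bounded derivatives of $T_p\circ\varphi_p$ on compacts.'' By its definition, the pseudometric at the point $\varphi_p(z)$ is $\|\Delta_{\varphi_p(z)}\circ\Phi'_{\varphi_p(z)}(\varphi_p(z))(\cdot)\|_1$, i.e.\ it is normalized at the \emph{moving} point $\varphi_p(z)$ with scales $\tau_k(\varphi_p(z),\epsilon(\varphi_p(z)))$ --- not at $\eta_p$ or ${\eta'}_p$. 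So the bound does not ``say precisely'' that $\|\Delta_{\eta_p}\circ\Phi'_{\eta_p}(\eta_p)\,d\varphi_p(z)\overrightarrow{X}\|_1\lesssim 1$ for $z\neq a$. To trade one normalization for the other you must already know that $\varphi_p(z)$ lies in a pseudoball $Q(\eta_p,C\epsilon(\eta_p))$ with controlled $C$ --- which is essentially the localization you are trying to establish. The chain-rule computation $\Lambda={\varphi'}_1\circ\cdots\circ{\varphi'}_5$ you invoke only yields $M(\eta,\overrightarrow{X})\gtrsim\|\Delta_{\eta_0}\circ\Phi'_{\eta_0}(\eta)\overrightarrow{X}\|_1$ for $\eta$ \emph{already inside} $Q(\eta_0,\epsilon(\eta_0))$; by itself it does not break this circularity, and your ``integrate the derivative bound'' step therefore does not go through as written.

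The paper breaks the circle by invoking the full statement of Lemma \ref{L4} rather than its internal computation: rescaling a disc $f:D\to\Omega$ to $f_{r_0}(u)=f(r_0u)$ so that $M[f_{r_0}(u),f_{r_0}'(u)]\leq A$, Lemma \ref{L4} (whose proof is precisely the induction along a chain of unit discs that resolves the bootstrapping issue) gives the attraction property $f(D_{r_0})\subset Q[f(0),\epsilon(f(0))]$ outright; covering a compact $K\subset\omega$ by finitely many balls of radius $r_0$ and chaining with (\ref{Eq24}) then yields $\varphi_p(K)\subset Q[{\eta'}_p,C(K)\epsilon_p]$, so that $T_p\circ\varphi_p(K)$ lies in a fixed polydisc and Montel applies directly --- no separate derivative bound for $T_p\circ\varphi_p$ is needed. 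If you replace your derivative-integration step by this appeal to Lemma \ref{L4}, your argument becomes the paper's. Your remaining material --- identifying the limit domain via the convergence of $\rho^{\epsilon_p}_{{\eta'}_p}$ using (\ref{Eq20}) and Lemma \ref{L5}, and the maximum-principle argument placing the limits in $M_P$ rather than on its boundary --- is correct and is in fact spelled out in more detail than the paper's one-line conclusion.
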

\begin{proof}
Let $f: D\to \Omega$ be a holomorphic map with $f(0)$ near $(0,\cdots,0)$. By Theorem \ref{T2}, we have 
$$M[f(u),f'(u)]\lesssim K_\Omega(f(u),f'(u)) \leq K_D(u,\frac{\partial}{\partial u}).$$

Suppose $0<r_0<1$ such that $r_0\sup\limits_{|u|\leq r_0}K_D(u,\frac{\partial}{\partial u})\leq A$, where $A$ is the constant in Lemma \ref{L4}. Set $f_{r_0}(u):=f(r_0 u)$. Then
$$M[f_{r_0}(u),{f_{r_0}}'(u)]\leq A.$$

By Lemma \ref{L4}, we have $f(D_{r_0})=f_{r_0}(D)\subset Q[f(0), \epsilon(f(0))].$

This inclusion is also true if $D$ is replaced by the unit ball in $C^k$. Let $f:\omega \to \Omega$ be a holomorphic map such that 

$f(a)$ near $(0,\cdots,0)$  for some point $a\in\omega$. 
For any compact subset $K$ of $\omega$, by using a finite covering of balls of radius $r_0$ and by the property (\ref{Eq24}), we have  
$$f(K)\subset Q[f(a), C(K)\epsilon(f(a))],$$
where $C(K)$ is a constant which depends on $K$.

Since $\eta_p:=\varphi_p(a)$ converges to the origin, it implies that 
$$\varphi_p(K)\subset Q[{\eta'}_p, C(K)\epsilon(\eta_p)].$$ 

Thus $T_p\circ\varphi_p(K)\subset D_{\sqrt{C(K)}}\times\cdots\times D_{\sqrt{C(K)}} \times D_{C(K)}$. By the Montel's theorem and a diagonal process, the sequence $T_p\circ\varphi_p$ is normal and its limits are holomorphic mappings from $\omega$ to the domain of the form 
$$M_P=\{(w_1,\cdots,w_n)\in \CC^n:Re  w_n+P(w_1,\bar w_1)+\sum_{\alpha=2}^{n-1}|w_\alpha|^2<0\}.$$
\end{proof}

\section{Proof of Theorem \ref{T}}
\setcounter{theorem}{0}
\setcounter{nx}{0}
\setcounter{define}{0}
\setcounter{equation}{0}
  In this section, we use the Berteloot's method (see \cite{Ber2}) to complete the proof of Theorem \ref{T}. First of all, for a domain $\Omega$ in $\CC^n$ and $z\in \Omega$ we shall denote by $\P(\Omega,z)$ the set of polynomials $Q\in \P_{2m}$ such that $Q$ is subharmonic and there exists a biholomorphism $\psi : \Omega\to M_Q$ with $\psi(z)=(0',-1)$. By the similar argument as in the proof of Proposition 3.1 of \cite{Ber2} (also by using Theorem \ref{T:5} and Lemma \ref{L:4}), one also obtains that, if $\Omega$ satisfies the assumptions of our theorem, then $\P(\Omega,z)$ is never empty. Moreover, there are choices of $z$ such that every element of $\P(\Omega,z)$ is of degree $2m$. More precisely, we have the  following.
\begin{proposition} \label{P1} Let $\Omega$ be a domain in $\CC^n$ such that:
\begin{enumerate}
\item[(1)] $\exists \xi_0\in \partial \Omega$ such that $\partial \Omega$ is of class $C^\infty,$ pseudoconvex and of finite type in a neighbourhood of $\xi_0$.
\item[(2)] The Levi  form has rank at least $n-2$ at $\xi_0$. 
\item[(3)] $\exists z_0\in \Omega,\ \exists \varphi_p\in Aut(\Omega) $ such that $\lim\varphi_p(z_0)=\xi_0.$ 
\end{enumerate}

Then
\begin{enumerate}
\item[(a)] $\forall z\in \Omega: \P(\Omega,z)\ne \emptyset$. 
\item[(b)] $\exists \tilde z_0 \in \Omega$ such that if $Q\in \P(\Omega,\tilde z_0),$ then $\deg Q=2m,$ where $2m$ is the type of $\partial \Omega$ at $\xi_0.$ 
\item[(c)] $\exists Q\in \P(\Omega,\tilde z_0)$ such that $Q=H+R,$ where $H\in \H_{2m}$ and $\deg R<2m$.
\end{enumerate}  
\end{proposition}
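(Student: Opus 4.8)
The plan is to follow the strategy of Berteloot (Proposition 3.1 in \cite{Ber2}), adapting the three assertions to our setting by replacing the tools available for bounded domains in $\CC^2$ with Theorem \ref{T:5}, Lemma \ref{L:4}, and Remark \ref{R:4}. For part (a): given any $z\in\Omega$, I would use assumption (3) together with the fact that $\Omega$ is taut (Remark \ref{R:4}) to conjugate the orbit so that the base point moves to $z$; concretely, pick $\psi_p\in Aut(\Omega)$ with $\psi_p(z)\to\xi_0$ (such exist since, by Proposition \ref{T:8}(a), $\varphi_p\to\xi_0$ uniformly on compacta, so $\varphi_p(z)\to\xi_0$ already). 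Apply Theorem \ref{T:5} with $\omega=\Omega$ and the sequence $\varphi_p$: the dilated maps $T_p\circ\varphi_p$ form a normal family with limit a holomorphic map $\Omega\to M_P$ for some $P\in\P_{2m}$, and by pseudoconvexity of the limit domain $P$ is subharmonic with $\Delta P\not\equiv 0$. The inverses $\varphi_p^{-1}$ also converge (after passing to a subsequence), using tautness of $\Omega$ and Lemma \ref{L:4} applied to $M_P$ realized in the half-space model $M_k$, so Proposition \ref{T:7} forces the limit to be a biholomorphism $\Omega\simeq M_P$; normalizing the base point $z\mapsto(0',-1)$ by an affine automorphism of $M_P$ (translations in $\mathrm{Re}\,w_n$ and the obvious rotations/dilations) shows $\P(\Omega,z)\ne\emptyset$.

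For part (b): the point is that the \emph{type} of $\partial M_Q$ at its distinguished boundary point equals $\deg Q$, and the type is a biholomorphic invariant that is preserved under the scaling, so every $Q\in\P(\Omega,z)$ has degree $\le 2m$ with equality exactly when the scaling does not degenerate the top-order part. To force equality I would choose $\tilde z_0$ cleverly: run the scaling along the orbit $\varphi_p(z_0)$ itself (not an arbitrary base point). Because $\varphi_p(z_0)\to\xi_0$ and $\partial\Omega$ has type exactly $2m$ at $\xi_0$, the normalization coefficients $A_m(\varphi_p(z_0))$ stay bounded below (see the discussion around \eqref{Eq7}, where $|A_m(z')|\ge c>0$ on $U$), so $\epsilon_p^{-1}\tau(\eta_p',\epsilon_p)^m$ does not tend to $0$; hence the degree-$2m$ part of $P$ survives in the limit and $\deg Q=2m$ for the resulting $Q$. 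Taking $\tilde z_0:=\varphi_{p_0}^{-1}\big(\text{a point deep in }U_0^-\big)$ for $p_0$ large — more precisely, $\tilde z_0=z_0$ composed with enough automorphisms so that its orbit is forced through the region where $A_m$ is bounded below — pins down such a point; then any $Q\in\P(\Omega,\tilde z_0)$ inherits $\deg Q=2m$ because the biholomorphism type of $\Omega$ is fixed and the type invariant is $2m$.

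For part (c): write $Q=Q_{2m}+R$ where $Q_{2m}$ collects the homogeneous degree-$2m$ terms and $\deg R<2m$; I must show we may arrange $Q_{2m}=H$ to be \emph{homogeneous subharmonic} with $\Delta H\not\equiv0$, i.e. that the top part alone is subharmonic. The idea, following Berteloot, is a second rescaling of the model $M_Q$ itself: apply the non-isotropic dilation $(w_1,\dots,w_n)\mapsto(t\,w_1,\,t^{m}w_2,\dots,t^{m}w_{n-1},\,t^{2m}w_n)$ with $t\to\infty$ (adjusting exponents so the $|w_\alpha|^2$ terms scale correctly, which forces the middle coordinates to scale like $t^m$), under which $M_Q$ converges to $M_{Q_{2m}}$; by Theorem \ref{T:5} / Lemma \ref{L:4} this limit domain is again biholomorphic to $\Omega$ (the dilations being automorphisms post-composed with the original biholomorphism and its orbit), and pseudoconvexity of the limit gives subharmonicity of $H:=Q_{2m}$, with $\Delta H\not\equiv 0$ because $\deg H=2m$ (a nonzero homogeneous polynomial of even degree without harmonic terms cannot be harmonic). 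Thus $Q=H+R$ with $H\in\H_{2m}$.

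The main obstacle I expect is part (b) — making the choice of $\tilde z_0$ rigorous. One must ensure that the scaling sequence attached to $\tilde z_0$'s orbit genuinely keeps $A_m$ bounded below (equivalently, that the limit model does not drop type), and simultaneously that \emph{every} $Q\in\P(\Omega,\tilde z_0)$, not just the one produced by this particular scaling, has degree $2m$; this requires knowing that the type of the model $M_Q$ at $(0',-1)$ is a biholomorphic invariant of the pair $(\Omega,\tilde z_0)$ and equals the type $2m$ of $\partial\Omega$ at $\xi_0$ — which in turn uses the convergence $\Delta_{\eta_p'}^{\epsilon_p}\circ\Phi_{\eta_p'}(U_0^-)\to M_P$ from Section 3 together with invariance of finite type under the scaling. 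The estimates \eqref{Eq7}, \eqref{Eq8} and Lemma \ref{L33} are exactly what is needed to control this, so the argument is available, but assembling it carefully is the delicate point.
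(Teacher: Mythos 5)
The paper does not actually write out a proof of this proposition: it states that the result follows ``by the similar argument as in the proof of Proposition 3.1 of [Ber2]'', with Theorem \ref{T:5} and Lemma \ref{L:4} supplying the ingredients that replace Berteloot's two-dimensional, bounded-domain tools. Your overall strategy is therefore the same as the paper's intended one, and your treatment of part (a) is essentially correct: scale along the orbit of $z$, use Theorem \ref{T:5} for the forward maps, tautness of $\Omega$ together with Lemma \ref{L:4} for the inverses, and Proposition \ref{T:7} to upgrade the limits to a biholomorphism onto $M_P$. Part (c) you actually overcomplicate: since $\P(\Omega,\tilde z_0)$ consists by definition of subharmonic polynomials without harmonic terms, once (b) gives $\deg Q=2m$ the top homogeneous part $H:=Q_{2m}$ is automatically subharmonic (from $\Delta H(w)=\lim_{t\to\infty}t^{2-2m}\Delta Q(tw)\geq 0$) and non-harmonic (a nonzero homogeneous polynomial of degree $2m$ with no $z^{2m}$, $\bar z^{2m}$ terms cannot be harmonic), so $H\in\H_{2m}$ with no further work. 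Your proposed second rescaling would in any case not give $\Omega\simeq M_{Q_{2m}}$ directly, because the dilation sends the base point $(0',-1)$ to $(0',-t^{2m})$, which escapes to infinity; fortunately (c) does not require that conclusion.

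The genuine gap is in part (b), precisely where you flag the difficulty. Two specific problems. First, the assertion that ``the type of $\partial M_Q$ at its distinguished boundary point equals $\deg Q$'' is false: $(0',-1)$ is an interior point of $M_Q$, and at the boundary point $0\in\partial M_Q$ the type equals the order of vanishing of the non-harmonic part of $Q$, i.e.\ its \emph{lowest} degree, not its highest. What is true is that the type at every finite boundary point of $M_Q$ is at most $\deg Q$ (since $\Delta Q\not\equiv 0$ has degree $\deg Q-2$), but to use this one would need the biholomorphism $\psi:\Omega\to M_Q$ to identify a neighbourhood of $\xi_0$ in $\partial\Omega$ with a finite piece of $\partial M_Q$, and no boundary extension result is available here. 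Second, choosing $\tilde z_0$ ``deep in $U_0^-$'' so that one particular scaling sequence keeps $A_m$ bounded below only controls the single polynomial produced by that scaling; it says nothing about an arbitrary $Q'\in\P(\Omega,\tilde z_0)$ arising from a completely unrelated biholomorphism $\psi'$. Ruling out such a $Q'$ of degree $<2m$ is the actual content of Berteloot's Proposition 3.1(b): one composes the two model realizations, applies the normal-families machinery (here Lemma \ref{L:4} and Proposition \ref{T:7}) to the resulting sequence of biholomorphisms between polynomial models, and derives a contradiction with the finite type $2m$ at $\xi_0$. That comparison argument is absent from your sketch, so as written (b) is not proved.
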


The control of sequence of dilations associated to the "orbit" $(\varphi_p(\tilde z_0))$ is closely related to the asymptotic behaviour of $(\varphi_p(\tilde z_0))$ in $\Omega$. Unfortunately, the direct investigation of this behaviour seems impossible. Our aim is therefore to study the image of $(\varphi_p(\tilde z_0))$ in some rigid polynomial realization $M_Q$ of $\Omega$. The proof of our theorem follows from the following proposition which summarizes the different possibilities.     
\begin{proposition} \label{P2}Let $\Omega$ be a domain in $\CC^n$ satisfying the following assumptions:
\begin{enumerate}
 \item[(1)]$\partial \Omega$ is smoothly pseudoconvex in a neighbourhood of $\xi_0\in \partial \Omega$ and of finite type $2m$ at $\xi_0$.
 \item[(2)]$\exists z_0\in \Omega,\ \exists \varphi_p\in Aut(\Omega) $ s.t. $\lim\varphi_p(z_0)=\xi_0.$ 
Let $\tilde z_0\in \Omega$ and $Q\in \P(\Omega,\tilde z_0) $ be given by Proposition \ref{P1} and let $\psi$ denote a biholomorphism between $\Omega$ and $M_Q$ which maps $\tilde z_0$ onto $(0',-1)$; denote $\psi\circ\varphi_p(\tilde z_0)  $ as $a_p=(a_{1p},\cdots,a_{np})$ and $|Re\psi_n\circ\varphi_p(\tilde z_0)+Q[\psi_1\circ\varphi_p(\tilde z_0)]+|\psi_2\circ\varphi_p(\tilde z_0)|^2+\cdots+|\psi_{n-1}\circ\varphi_p(\tilde z_0)|^2|$ as $\epsilon_p$. Let $H$ be the homogeneous part of highest degree in $Q.$
\end{enumerate}
Then three possibilities may occur
 \begin{enumerate}
 \item[(i)] $\lim\epsilon_p=0$ and $\lim \inf |a_{1p}|<+\infty$.\\
 Then $Q(z)=H(z-a)+2 Re\sum_{j=0}^{2m}\frac{Q_j(a)}{j!}(z-a)^j (a\in \CC)$ and $\Omega\simeq M_H$.
  \item[(ii)] $\lim \epsilon_p=0$ and $\liminf |a_{1p}|=+\infty$.\\
Then $Q(z)=H=\lambda [(2 Re( e^{i\nu}z))^{2m}-2Re(e^{i\nu}z)^{2m}](\lambda>0,\ \nu\in [0,2\pi))$ and $\Omega\simeq M_H$
\item[(iii)]$\limsup \epsilon_p>0$. Then $H=\lambda |z|^{2m}(\lambda>0) $ and $\Omega\simeq M_H$
\end{enumerate}
\end{proposition}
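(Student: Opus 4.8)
The plan is to run the scaling machinery on the orbit $\bigl(\psi\circ\varphi_p(\tilde z_0)\bigr)_p = (a_p)_p$ inside the rigid model $M_Q$, exactly as in Berteloot's argument, and to read off the three dichotomous conclusions from the geometry of the limiting domain. First I would set $\Phi_p := \Delta_{a_p'}^{\epsilon_p}\circ \Phi_{a_p'}$, where $a_p'$ is the point obtained from $a_p$ by adding $(0,\dots,0,\epsilon_p)$ in the $w_n$-direction so that $a_p' \in \partial M_Q$, with $\epsilon_p$ the boundary distance defined in the statement. By Theorem \ref{T:5} applied with $\omega = \Omega$ and the sequence $\varphi_p$ (or rather $\psi\circ\varphi_p$, landing in $M_Q$), the sequence $\Phi_p\circ\psi\circ\varphi_p$ is normal and any limit is a holomorphic map $F$ from $\Omega$ into some new model $M_{\tilde Q}$, $\tilde Q\in\P_{2m}$. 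Running the same construction on the inverse automorphisms $\varphi_p^{-1}$ (whose orbit $\psi\circ\varphi_p^{-1}$ emanates from a point converging into $M_Q$ — here one uses tautness, Remark \ref{R:4}, to control where $\varphi_p^{-1}(\tilde z_0)$ goes, passing to a subsequence so that it stays in a compact subset of $\Omega$) produces a limit $G$ in the other direction. Proposition \ref{T:7} then forces $F$ to be a biholomorphism $\Omega \simeq M_{\tilde Q}$, so it suffices to identify $\tilde Q$ with (a domain biholomorphic to) $M_H$ in each of the three cases.

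The second step is to compute the dilated defining functions explicitly. Writing out $\epsilon_p^{-1}\,\hat\rho\circ\Phi_{a_p'}^{-1}\circ(\Delta_{a_p'}^{\epsilon_p})^{-1}$ in the $w$-coordinates, using $Q = H + R$ with $H$ homogeneous of degree $2m$ and $\deg R < 2m$, the homogeneity of $H$ and the definition of $\tau(a_p',\epsilon_p)$ give that the rescaled $H$-part survives in the limit while the rescaled $R$-part and the error terms $O(\tau(a_p',\epsilon_p))$ vanish. The outcome depends on the behaviour of $a_{1p}$ and of $\epsilon_p$. If $\epsilon_p\to 0$ and $\liminf|a_{1p}|<+\infty$, after a subsequence $a_{1p}\to a\in\CC$ and the translation built into $\Phi_{a_p'}$ recenters $H$ at $a$; expanding $H(w_1+a)$ in powers of $w_1$ and absorbing the pure (harmonic) terms into $\Re w_n$ by a further holomorphic change of coordinates of the form $w_n \mapsto w_n + (\text{polynomial in }w_1)$, one gets precisely $Q(z) = H(z-a) + 2\Re\sum_{j=0}^{2m}\frac{Q_j(a)}{j!}(z-a)^j$ and $M_{\tilde Q}\simeq M_H$, giving case (i). If $\epsilon_p\to 0$ but $|a_{1p}|\to+\infty$, one rescales in the $w_1$-variable by $|a_{1p}|$ as well (an extra dilation allowed because the terms involving $|w_1|$ and $w_\alpha$ degenerate by Lemma \ref{L5}); the limiting polynomial must then be both homogeneous and translation-invariant along a real line, which forces the stated form $H = \lambda\bigl[(2\Re(e^{i\nu}z))^{2m} - 2\Re(e^{i\nu}z)^{2m}\bigr]$, i.e.\ $H$ is the harmonic-free part of a pure $2m$-th power; that is case (ii). If instead $\limsup\epsilon_p>0$, then along a subsequence $\epsilon_p$ is bounded below, the dilation is essentially trivial, and the model $M_Q$ is already homogeneous under the induced one-parameter group coming from the automorphisms; combined with subharmonicity and $\deg Q = 2m$ this forces $H=\lambda|z|^{2m}$, which is case (iii).

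The last step is to verify in each case that the identification $M_{\tilde Q}\simeq M_H$ persists as $\Omega\simeq M_H$: in cases (i) and (ii) this is immediate because the translation and the $w_n$-shift are explicit biholomorphisms of $\CC^n$ carrying $M_{\tilde Q}$ onto $M_H$ and composing with $\psi$; in case (iii) one simply notes $\tilde Q$ and $H$ differ by lower-order (hence, after the scaling, negligible) terms that can be removed by the same kind of polynomial coordinate change, and $M_H$ with $H=\lambda|z|^{2m}$ is the desired model. I expect the main obstacle to be the case analysis for (ii): showing that homogeneity of degree $2m$ together with invariance under a real-translation subgroup really pins $H$ down to the single one-parameter family displayed, and in particular checking that subharmonicity with $\Delta H\not\equiv 0$ excludes the degenerate possibility $H\equiv 0$. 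The rest is bookkeeping with the dilation formula \eqref{Eq20} and the polydisc estimates of Section 3, all of which are in place.
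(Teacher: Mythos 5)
Your treatment of case (iii) has a genuine gap. You assert that when $\limsup\epsilon_p>0$ ``the dilation is essentially trivial, and the model $M_Q$ is already homogeneous under the induced one-parameter group coming from the automorphisms; combined with subharmonicity and $\deg Q=2m$ this forces $H=\lambda|z|^{2m}$.'' Subharmonicity plus homogeneity of degree $2m$ does not force $H=\lambda|z|^{2m}$ --- the polynomial $\lambda[(2\Re(e^{i\nu}z))^{2m}-2\Re(e^{i\nu}z)^{2m}]$ of case (ii) is itself a subharmonic homogeneous counterexample --- and you never explain what ``the induced one-parameter group'' is or why it pins down circular symmetry. The paper's argument here is of a completely different nature: it introduces the real action $g_t(z)=\psi^{-1}[\psi(z)+(0',it)]$ coming from the imaginary translations that preserve the rigid model $M_Q$, proves (following Lemma 4.3 of Berteloot) that this action is parabolic, i.e.\ $\lim_{t\to\pm\infty}g_t(z)=\xi_0$ for every $z\in\Omega$, invokes boundary regularity to get that $(g_t)$ is of class $C^\infty$ up to $\partial\Omega$, and then applies the Bedford--Pinchuk analysis of the resulting holomorphic tangent vector field $\vec X=(d/dt)_{t=0}\,g_t$ on $\partial\Omega$ near $\xi_0$ to conclude $H=\lambda|z|^{2m}$. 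None of this machinery appears in your sketch, and without it case (iii) does not close.

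A secondary problem is your reliance on Theorem \ref{T:5} applied to the orbit $(a_p)$ inside $M_Q$. That theorem requires the orbit to converge to a finite boundary point near which the boundary is smooth, pseudoconvex and of finite type; but in case (ii) one has $|a_{1p}|\to+\infty$, and in case (iii) the points $a_p$ stay at distance $\geq c>0$ from $\partial M_Q$, so in neither case does the hypothesis hold. The paper avoids this by working entirely with explicit polynomial automorphisms $\phi_p$ of $\CC^n$ adapted to the rigid structure of $M_Q$ (formula (\ref{e2})), normalizing the rescaled polynomials $Q_p$ of (\ref{e1}) by choosing $\tau_p$ so that $\|Q_p\|=1$, and then using the normal-families Lemma \ref{L:4} for maps into the varying models $M_{Q_p}$ together with Proposition \ref{T:7}. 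Your outline for cases (i) and (ii) is in the right spirit (recentering when $a_{1p}$ stays bounded, identifying the translation-invariant limit when it does not), but it should be rebuilt on this normalization rather than on Theorem \ref{T:5}.
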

\begin{proof}
We may assume that $\deg Q>2$. Otherwise $Q=|z|^{2}$ and the theorem already follows from Proposition \ref{P1}. Let us first consider the case where $\lim\epsilon_p=0$. Define a sequence of polynomials $Q_p$ by 
\begin{equation}\label{e1}
Q_p=\dfrac{1}{\epsilon_p}\sum_{j,q>0}\frac{Q_{j,\bar q}(a_{1p})}{(j+q)!}\tau_p^{j+q}z_1^j\bar z_1^q
\end{equation}
where $\tau_p>0$ is chosen in order to achieve $\|Q_p\|=1$. Taking a sequence we may assume that $\lim Q_p=Q_\infty$ where $Q_\infty\in\P_{2m}$ and $\|Q_\infty\|=1$. 

Let us consider the sequence of automorphisms of $\CC^n$
\begin{equation*}
\begin{split}
\phi_p: \CC^n&\to \CC^n\\ 
z &\mapsto z', 
\end{split}
\end{equation*}
where $z'$ is given by  

\begin{equation}\label{e2}
\begin{cases}
{z'}_n=\dfrac{1}{\epsilon_p}\Big[z_n-a_{np}-\epsilon_p+2\sum\limits_{j=1}^{2m}\frac{Q_j(a_{1p})}{j!}(z_1-a_{1p})^j +2\sum\limits_{j=2}^{n-1}\bar{a}_{jp}(z_j-a_{jp})       \Big]\\
{z'}_1 =\dfrac{1}{\tau_p}[z_1-a_{1p}]\\
{z'}_2 =\dfrac{1}{\sqrt{\epsilon_p}}[z_2-a_{2p}]\\
...\\
{z'}_{n-1} =\dfrac{1}{\sqrt{\epsilon_p}}[z_{n-1}-a_{n-1p}]
\end{cases}
\end{equation}

It is easy to check that $\phi_p$ maps biholomorphically $M_Q$ onto $M_{Q_p}$ and $a_p$ to $(0',-1)$. 

i) and ii) are now obtained with a slightly modification of the proof of Proposition 4.1 in \cite{Ber2}. We are going to prove iii).

We now consider the case where $\limsup\epsilon_p>0$. After taking some subsequence we may assume that $\epsilon_p\geq c>0$ for all $p$. We shall study the real action $(g_t)$ defined on $M$ by
 \begin{equation}\label{e5}
\begin{cases}
g: \RR\times \Omega\to \Omega\\
(t,z)\mapsto g_t(z)\\
g_t(z)=\psi^{-1}[\psi(z)+(0',it)].
\end{cases}
\end{equation}
Modifying the proof of Lemma 4.3 of \cite{Ber2}, we also conclude that this action is a parabolicity, that is 
\begin{equation}\label{e6}
\forall z\in \Omega: \lim_{t\to \pm\infty}g_ t(z)=\xi_0.
\end{equation}
According to \cite{B-P2}, the action $(g_t)_t$ itself is of class $C^\infty$. Thus, we may now consider the holomorphic tangent vector field $\vec{X}$ defined on some neighbourhood of $\xi_0$ in $\partial \Omega$ by 
$$\vec{X}=\big(\frac{d}{dt}\big)_{t=0} g_t(z).$$

The analysis of this vector field is given in the papers of E. Bedford and S. Pinchuk \cite{B-P1}, \cite{B-P2}. It yields the conclution that $H=|z|^{2m}$. It is then possible to study the scaling process more precisely for showing that $\Omega$ is biholomorphic to $M_{|z|^{2m}}$. This ends the proof of Proposition \ref{P2}
\end{proof}

\end{document}